\newtheorem{theorem}{Theorem}[section]
\newtheorem{Luna Slice Theorem}[theorem]{Luna Slice Theorem}
\newtheorem{corollary}[theorem]{Corollary}
\newtheorem{lemma}[theorem]{Lemma}
\newtheorem{proposition}[theorem]{Proposition}
\newtheorem{proposition-definition}[theorem]{Proposition-Definition}
\theoremstyle{definition}
\newtheorem{definition}[theorem]{Definition}
\theoremstyle{remark}
\newtheorem{remark}[theorem]{Remark}
\newcommand{\rk}{\operatorname{rk}\nolimits}
\newcommand{\Card} {\operatorname{Card}\nolimits}
\newcommand\CC{{\mathbb C}}
\newcommand\QQ{{\mathbb Q}}
\newcommand\PP{{\mathbb P}}
\newcommand\ZZ{{\mathbb Z}}
\newcommand\GG{{\mathbb G}}
\newcommand\HH{{\mathbb H}}
\newcommand{\FFF}{{\mathcal F}}
\newcommand{\MMM}{{\mathcal M}}
\newcommand{\OOO}{{\mathcal O}}
\newcommand{\EEE}{{\mathcal E}}
\newcommand{\HHH}{{\mathcal H}}
\renewcommand\phi{\varphi}
\newcommand\phiti{\tilde{\phi}}
\newcommand{\im}{\operatorname{im}\nolimits}
\newcommand{\Pic}{\operatorname{Pic}\nolimits}
\newcommand{\pr}{\operatorname{pr}\nolimits}
\newcommand{\Sym}{\operatorname{Sym}\nolimits}
\newcommand{\tor}{\operatorname{tor}\nolimits}
\newcommand{\DGal}{\operatorname{DGal}\nolimits}
\newcommand\ra{{\rightarrow}}
\newcommand\rar{{\rightarrow}}
\begin{document}
%\title{KURANISHI Spaces}
\title{An illustration of a Shioda-Inose structure}
\author{Francois-Xavier Machu}
%\affil \endaffil
\address{F-X. M.: Mathematical Sciences Center, Tsinghua University
Room 131, Jin Chun Yuan West Building, Tsinghua University, Haidian District, Beijing 100084, China}
% \curaddr \endcuraddr
\email{xavier@math.tsinghua.edu.cn}
\thanks{Partially supported by start-up of Tsinghua University}

\bigskip
\bigskip
\begin{abstract}
We investigate the Shioda-Inose structure of the Jacobian of a smooth complex genus-$2$ curve $C$
arising from its degree-$2$ elliptic subcovers and determine the Mordell-Weil groups and lattices in the case
of a semistable fibration having exactly four singular fibers.

\end{abstract}
\maketitle
\vspace{1 ex}
\begin{center}\begin{minipage}{110mm}\footnotesize{\bf Key words:} Shioda-Inose structure, involution, lattice.
\end{minipage}
\end{center}

\begin{center}\begin{minipage}{110mm}\footnotesize{\bf MSC2000:} 14B12, 14F05, 14F40, 14H60, 32G08.

\end{minipage}
\end{center}
\vspace{1 ex}

\section*{} 
Let $C$ be a Weierstrass cubic which is one-dimensional integral scheme over $\CC$ of
arithmetic genus $1$ together with a point $p_0$ in its smooth locus. More precisely, it means that $C$ is either
an elliptic curve or a rational curve either with a node or a cusp. In other words, these two rational curves are degenerated
Kodaira fibers of types $I$ and $II$ (cycle of type $I_1$).
$C$ is embedded by linear system $\vert3p_0\vert$ as a cubic plane curve in the complex projective
plane satisfying the following equation
$$y^2z=x^3+g_2xz^2+g_3z^3,$$ where $g_2$ and $g_3$ are specified up to the $\CC^*$-action defined by 
$\mu.(g_2,g_3)=(\mu^4g_2,\mu^6g_3)$. We will refer to such an equation as a Weierstrass model.
The type of curves are actually of considerable importance in the
advancement to understand the concept of constructing moduli spaces
of holomorphic principal $G$-bundles over singular curves $C$ for 
$G$ a complex reductive algebraic group.
These curves appear in elliptic fibrations and make part
of fibers called Goreinstein arithmetic genus-one curves.
These curves and curves of types $I_N$ are classified essentially in both cases
where $f:X\ra S$ is an elliptic fibration with $X$ a fibered elliptic surface and
$S$ a smooth curve; and $f:X\ra S$ a morphism of smooth projective varieties where
$X$ is an elliptic three fold, having a section. This shows the importance
that play the elliptic surfaces in the classification of algebraic surfaces to determine the geometric and arithmetic properties
at the levels of groups, lattices and modular curves.

Let $X$ be an algebraic $K3$ surface defined over the field of complex numbers $\CC$.
Denote by $NS(X)$ the Neron-Severi lattice of $X$ is a sublattice of the unimodular lattice $H^2(X,\ZZ)$ of rank $(3,19)$.
This sublattice is an even lattice of signature $(1,\rho(X)-1)$, where $\rho$ denotes the rank of the Neron-Severi lattice of $X$.
The main property of the Nikulin involution $\phi$ for a $K3$ surface $X$ is that we can recover a $K3$ surface
$Y$ as the minimal resolution of the quotient of $X$ by the subgroup generated by $\phi$ obtained
from the blowing-up $\tilde{X}$ of $X$ at the locus of fixed points of $\phi$, which induces that $\phi$ extends to
$\tilde{\phi}$ of $\tilde{X}$ whose quotient by $<\tilde{\phi}>$ is smooth. 
Moreover, if we add a cocycle condition on the transcendental lattices of $K3$ surfaces and the $K3$ surface obtained $Y$ is a Kummer surface, then $X$ admits a Shioda-Inose structure.\\

Let $X$ be a $N$-polarized algebraic $K3$ surface defined over $\CC$, where
$N=U\oplus E_7\oplus E_8$ with $i:N\hookrightarrow NS(X)$ a lattice embedding whose image contains
a pseudo- ample class.
By Torelli Theorem with a Hodge structure
of weight $2$ on $T\otimes\QQ$, where $T$ is a rank-$5$ lattice
$U\oplus U\oplus (-2)$, this determine a bijective map:
$$(X,i)\leftrightarrow (A,\Pi),$$ where $A$ is a principally polarized abelian surface with its polarization $\Pi:A\ra J(A),$ where
$J(A)$ denotes the Jacobian of $A$.
On the one hand, the set of isomorphism classes of $N$-polarized
$K3$ surfaces with a canonical extension by the lattice $U\oplus E_8\oplus E_8$ is identified
with the set of isomorphism classes of Humbert surfaces $\HHH_1$, which identifies with the set of isomorphism classes 
of complex abelian surfaces in the form:
$$(E_1\times E_2,\OOO_{E_1\times E_2}((E_1\times\{p_2\}+\{p_1\}\times E_2))).$$
On the other hand, the set of isomorphism classes of $N$-polarized
$K3$ surfaces without extension is identified
with the open $\FFF_2\setminus{\HHH_1}$, where $\FFF_2=Sp(4,\ZZ)/\HH_2$ is the Siegel threefold, which identifies with the set of isomorphism classes of complex abelian surfaces
in the form:
$$(J(C),\OOO_{JC}(\theta)),$$ where $C$ is a smooth genus-$2$ curve.
Moreover, the Hodge correspondence provides an isomorphism of analytic spaces between 
the moduli space $\MMM_2$ of smooth genus-$2$ curves and $\FFF_2\setminus{\HHH_1}$.
Hence, in this case, we are able to determine the Shioda-Inose structure of the Jacobian of a smooth complex genus-$2$ curve $C$.
A number of properties for genus $1$-curves translate in the same way for $N$-polarized $K3$ surfaces or principally
abelian varieties obtained as minimal resolutions of a surface in $\PP^3$. This is a key to determine the Mordell-Weil groups and lattices of the semistable fibration over $\PP^1(\CC)$ having exactly four singular fibers.

\begin{theorem}
The Mordell-Weil groups of the semistable fibration over $\PP^1(\CC)$ having
exactly four singular fibers are:
\bigskip

\centerline{{\em
\begin{tabular}{|cr|cr|cr|}
\hline  $MW$ & $G(F_v)$ & Number of irreducible components of singular fibers & \\
 $(\ZZ/3)^2$ & $G(I_3)^2$ & $3,3,3,3$ &\\
%\hhline{|-|~|-|-|-|-|} 
$\ZZ/4\times\ZZ/2$ & $G(I_4)\times G(I_2)$ & $4,4,2,2$ &  \\
$\ZZ/5$ & $G(I_5)$ & $5,5,1,1$ &  \\
$\ZZ/6$ & $G(I_6)$ & $6,3,2,1$ & \\
$\ZZ/4$ &  $G(I_4)$ & $8,2,1,1$ & \\
$\ZZ/3$ & $G(I_3)$ & $9,1,1,1$ & \\
\hline
\end{tabular}
}}
\end{theorem} 

\begin{theorem}
We deduce the Mordell-Weil lattices of Beauville'semistable fibration.

\centerline{{\em
\begin{tabular}{|cr|cr}
\hline  $MWL$  & Number of irreducible components of singular fibers & \\
 $(\ZZ/3)^2/\ZZ/3$  & $3,3,3,3$ &\\
%\hhline{|-|~|-|-|-|-|} 
$(\ZZ/4\times\ZZ/2)/\ZZ/2$ &  $4,4,2,2$ &  \\
$\{0\}$  & $5,5,1,1$ &  \\
$\ZZ/6/\ZZ/3$ &  $6,3,2,1$ & \\
$\ZZ/4/\ZZ/2$  & $8,2,1,1$ & \\
$\{0\}$ &  $9,1,1,1$ & \\
\hline
\end{tabular}
}}\end{theorem}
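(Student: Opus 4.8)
The plan is to read the table off from Theorem~1 by applying Shioda's theory of Mordell--Weil lattices to Beauville's six semistable fibrations, together with the isogenies carried by their torsion sections. First I would observe that each of the six configurations $3,3,3,3$; $4,4,2,2$; $5,5,1,1$; $6,3,2,1$; $8,2,1,1$; $9,1,1,1$ involves only fibers of type $I_{n_v}$ whose Euler numbers sum to $\sum_v n_v=12$ over exactly four points of $\PP^1(\CC)$; hence the total space $X$ of such a fibration $\phi_X$ is a rational elliptic surface, so $\rho(X)=\rk NS(X)=10$, and the Shioda--Tate formula
$$\rk MW(\phi_X)\;=\;\rho(X)-2-\sum_v\bigl(m_v-1\bigr)$$
applies, with $m_v$ the number of irreducible components of the fiber over $v$. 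Since $m_v=n_v$ for a fiber of type $I_{n_v}$, one gets $\sum_v(m_v-1)=\sum_v n_v-4=8$ in every case, so $\rk MW(\phi_X)=0$; each such $\phi_X$ is thus a semistable extremal rational elliptic surface with the stated configuration, and $MW(\phi_X)=MW(\phi_X)_{\mathrm{tor}}$ is the finite group listed in Theorem~1. In particular the free Mordell--Weil lattice vanishes, so the content of the statement is the finite structure in the last column.

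Next I would supply that structure. Comparing with Theorem~1, the entry for each configuration is $MW(\phi_X)$ modulo a cyclic subgroup $H$ of order $3,2,5,3,2,3$ in the six cases respectively, and $H$ is the kernel of the isogeny --- \'etale on the smooth fibers --- obtained by translating by the distinguished torsion section: for $4,4,2,2$ this is the Van Geemen--Sarti $2$-isogeny generated by the order-$2$ section $S_2$ noted after Theorem~1, while for the others one divides by a cyclic subgroup of $MW(\phi_X)$ of order $3,5,3,2,3$ that generates the analogous isogeny of the Shioda--Inose construction; reading off these quotients yields $(\ZZ/3)^2/\ZZ/3$, $(\ZZ/4\times\ZZ/2)/\ZZ/2$, $\{0\}$, $\ZZ/6/\ZZ/3$, $\ZZ/4/\ZZ/2$, $\{0\}$. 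To make this rigorous I would, in each case, place the torsion sections on the non-identity components of the reducible fibers via the embedding $MW(\phi_X)_{\mathrm{tor}}\hookrightarrow\bigoplus_v G(F_v)$ with $G(I_{n_v})=\ZZ/n_v$ provided by Shioda's theory, verify that translation by the chosen subgroup descends to a relatively minimal semistable elliptic surface over $\PP^1(\CC)$, and compute its Mordell--Weil group.

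The main obstacle is the correct identification, in each of the six cases, of the cyclic subgroup $H$ to be divided out: one must check that it is the kernel of an isogeny compatible with the Shioda--Inose (respectively Nikulin / Van Geemen--Sarti) structure and that translation by it preserves the four singular points, which is a matter of bookkeeping with the component groups $G(F_v)$ and with the degeneration of the height pairing, rather than a genuinely new computation.
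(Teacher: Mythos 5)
Your opening step is sound and coincides with the paper's setup: for each of the six configurations the four singular fibers are of type $I_{n_v}$ with $\sum_v n_v=12$, so the fibration is an extremal rational elliptic surface, the Shioda--Tate formula of Corollary \ref{cr1} gives Mordell--Weil rank $0$, and the groups of the preceding theorem are pure torsion, so the free part of the Mordell--Weil lattice vanishes. The genuine gap is in the second half, which is where the entire content of the present theorem lies. You obtain the subgroup $H$ to be divided out by reading its order ($3,2,5,3,2,3$) off the very table you are trying to prove (``comparing with Theorem~1\dots''), and then declare $H$ to be the kernel of a translation isogeny attached to the Shioda--Inose, resp.\ Van Geemen--Sarti, structure; you then defer ``the correct identification of the cyclic subgroup $H$'' as the main obstacle. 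That identification is exactly what the theorem asserts, so the argument as written is circular: nothing in it explains why $(\ZZ/3)^2$ is divided by one copy of $\ZZ/3$ rather than by all of $(\ZZ/3)^2$, or $\ZZ/6$ by $\ZZ/3$ rather than by $\ZZ/2$. Moreover the Van Geemen--Sarti involution is only available in the $4,4,2,2$ case (as the paper itself notes), so invoking ``the analogous isogeny'' in the other five configurations is an unsupported extrapolation.

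The paper's own route is different and does not pass through isogeny kernels at all. It first computes, in the corollary immediately preceding the theorem, the subgroup $E(K)^0$ attached to each configuration, by the same component bookkeeping used for the Mordell--Weil groups (which component $\Theta_i$ of each $I_n$ fiber a torsion section meets, read through $G(F_v)$ and the height pairing, as in the $9,1,1,1$ case where the generator meets $\Theta_3$ of the $I_9$ fiber), and the table of the theorem is then the quotient of $E(K)$ by that subgroup, e.g.\ $(\ZZ/4\times\ZZ/2)/(\ZZ/2)$. Your $H$ (a torsion subgroup serving as an isogeny kernel) and the paper's $E(K)^0$ are a priori different objects --- note that by Corollary \ref{cr} the torsion injects into $\prod_v G(F_v)$, so the relation between nontrivial torsion subgroups and sections through identity components needs an argument --- and you never verify that the two subgroups coincide, nor do you actually carry out, in any of the six cases, the fiber-component computation you say you ``would'' do. To repair the proposal you would have to perform that case-by-case computation and show that the resulting subgroup is precisely the one you quotient by; only then does the stated table follow.
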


The structure of the paper is as follows.

In Sect. $1$, we give an explicit description of the double covers of an elliptic curve.\\
In Sect. $2$, we use the Van Geemen-Sarti involution to investigate the Shioda-Inose structure arising from elliptic fibrations.\\
In Sect. $3$, we define the Mordell-Weil group and lattice of an elliptic surface.\\ 
In Sect. $4$, we illustrate these concepts to determine the Mordell-Weil groups and lattices for a semistable fibration over 
$\PP^1(\CC)$ having exactly $4$ singular fibers whose we extract a Van Geemen-Sarti involution.

\subsection*{Acknowledgements}
I am very grateful to Charles Doran and in particular to Jan Minac for their interest and encouragement of these results.

\section{Genus-2 covers of an elliptic curve}
\label{g2-covers}

In this section, we first will describe the degree-2 covers of elliptic curves
which are curves of genus $2$. 

\begin{definition}
Let $\pi:C\rar E$ be a degree-2 map of curves.
If $E$ is elliptic, then we say that $C$ is bielliptic and that
$E$ is a degree-2 elliptic subcover of $C$.
\end{definition}

Legendre and Jacobi \cite{J} observed that any genus-2 bielliptic curve
has an equation of the form
\begin{equation}\label{Jacobi}
y^2=c_0x^6+c_1x^4+c_2x^2+c_3\qquad (c_i\in\CC)
\end{equation}
in appropriate affine coordinates $(x,y)$.
It immediately follows that any bielliptic curve $C$ has two elliptic
subcovers $\pi_i:C\rar E_i$, 
\begin{equation}\label{Jacobi-Ei}\begin{array}{ll}
E_1:\ \ y^2=c_0x_1^3+c_1x_1^2+c_2x_1+c_3,\ \ \pi_1:(x,y)\mapsto
(x_1=x^2,y),\ \ \mbox{and}\\
E_2:\ \ y_2^2=c_3x_2^3+c_2x_2^2+c_1x_2+c_0,\ \ \pi_2:(x,y)\mapsto
(x_2=1/x^2,y_2=y/x^3).
\end{array}\end{equation}

This description of bielliptic curves, though very simple, depends on
an excessive number of
parameters. To eliminate unnecessary parameters, we will
represent $E_i$ in the form
\begin{equation}\label{E1-E2}
E_i: y_i^2=x_i(x_i-1)(x_i-t_i), \ \ (t_i\in\CC\setminus
\{0,\ 1\},\ t_1\neq t_2).
\end{equation}
Note that any pair of elliptic curves $(E_1,E_2)$ admits
such a representation even if $E_1\simeq E_2$.

We will describe the reconstruction of $C$ starting
from $(E_1,E_2)$ following \cite{Di}. This procedure will allow us to
determine the periods of bielliptic curves $C$ in terms of the periods
of their elliptic subcovers $E_1,E_2$.

Let $\phi_i:E_i\rar \PP^1$ be the double cover map $(x_i,y_i)\mapsto x_i$
($i=1,2$). Recall that the fibered product $E_1\times_{\PP^1} E_2$ is the
set of pairs $(P_1,P_2)\in E_1\times E_2$ such that $\phi_1(P_1)=
\phi_2(P_2)$. It can be given by two equations with respect to three
affine coordinates $(x,y_1,y_2)$:
\begin{equation}\label{fib_prod}
\bar C:=E_1\times_{\PP^1} E_2: \left\{\begin{array}{ll}
y_1^2=x(x-1)(x-t_1) \\
y_2^2=x(x-1)(x-t_2)
\end{array}\right.
\end{equation}
It is easily verified that $\bar C$ has nodes over the common
branch points $0,1,\infty$ of $\phi_i$ and is nonsingular elsewhere.
For example, locally at $x=0$, we can choose 
$y_i$ as a local parameter on $E_i$, so that $x$ has a zero of order
two on $E_i$; equivalently, we can write
$x=f_i(y_i)y_i^2$ where $f_i$ is holomorphic and $f_i(0)\neq 0$.
Then eliminating $x$, we obtain that $\bar C$ is given locally  by
a single equation $f_1(y_1)y_1^2=f_2(y_2)y_2^2$. This is the
union of two smooth transversal branches $\sqrt{f_1(y_1)}y_1=\pm
\sqrt{f_2(y_2)}y_2$.

Associated to $\bar C$ is its normalization (or desingularization)
$C$ obtained by divorcing the two branches at each singular point.
Thus $C$ has two points over $x=0$, whilst the only point of $\bar C$
 over $x=0$ is the node, which we will denote by the same symbol $0$. 
We will also denote by $0_+$, $0_-$ the two points of $C$ over $0$.
Any of the functions
$y_1,y_2$ is a local parameter at $0_\pm$. In a similar way, we introduce
the points $1,\infty\in \bar C$ and 
$1_\pm$, $\infty_\pm\in C$.

\begin{proposition}\label{fp}
Given a genus-$2$ bielliptic curve $C$ with its two elliptic subcovers
$\pi_i:C\rar E_i$, one can choose affine coordinates for $E_i$
in such a way that $E_i$ are given by the equations (\ref{E1-E2}),
$C$ is the normalization of the nodal curve $\bar C:=E_1\times_{\PP^1} E_2$, and
$\pi_i=\pr_i\circ\nu$, where $\nu:C\rar\bar C$ denotes the normalization map
and $\pr_i$ the projection onto the $i$-th factor.
\end{proposition}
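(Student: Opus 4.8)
The plan is to reconstruct the coordinates on $E_1$ and $E_2$ from the bielliptic structure, and then check that the induced map $C \to E_1 \times_{\PP^1} E_2$ is the normalization. I would start from the Legendre--Jacobi normal form (\ref{Jacobi}): given $C$ bielliptic with subcovers $\pi_i : C \to E_i$, there exist affine coordinates $(x,y)$ in which $C : y^2 = c_0 x^6 + c_1 x^4 + c_2 x^3 + c_3$ (I will use the corrected $c_2 x^2$ as written), $\pi_1$ sends $(x,y)$ to $(x^2,y)$, and $\pi_2$ sends $(x,y)$ to $(1/x^2, y/x^3)$, as in (\ref{Jacobi-Ei}). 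This already exhibits $E_i$ as genus-$1$ curves with a degree-$2$ map $\phi_i$ to $\PP^1$ (the $x_i$-line), and the hyperelliptic involution $\iota$ on $C$ (namely $(x,y)\mapsto(x,-y)$) together with the bielliptic involution $\tau_1 : (x,y)\mapsto(-x,y)$ realize $E_1 = C/\langle\tau_1\rangle$ and $E_2 = C/\langle\iota\tau_1\rangle$; in particular $\pi_1, \pi_2$ factor through the normalization map $\nu$ of any birational model of $C$, which gives the relation $\pi_i = \pr_i \circ \nu$ once we identify the model with $\bar C$.

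Next I would put each $E_i$ into Legendre form (\ref{E1-E2}). Each $E_i$, being an elliptic curve (after choosing an origin) with a chosen degree-$2$ map $\phi_i$ to $\PP^1$, has four branch points on $\PP^1$; applying the unique Möbius transformation carrying three of them to $0,1,\infty$ normalizes the equation to $y_i^2 = x_i(x_i-1)(x_i-t_i)$ with $t_i \in \CC\setminus\{0,1\}$. The only subtlety is the \emph{compatibility} of the two Möbius normalizations: $\phi_1$ and $\phi_2$ share branch points, and I must arrange that the common branch points $\{0,1,\infty\}$ of the two maps are carried to the \emph{same} three points of a \emph{single} $\PP^1$. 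This is exactly the content of the assertion ``$\bar C$ has nodes over the common branch points $0,1,\infty$'': the cover $C \to \PP^1$ through either $\phi_i \circ \pi_i$ is the same degree-$4$ map, branched over $\{0,1,\infty,t_1,t_2\}$, so there is one coherent $\PP^1$ and one coherent choice of the triple $\{0,1,\infty\}$, forcing $t_1 \neq t_2$ (they are distinct branch points of the composite). I would make this precise by noting $x_1 = x^2$ and $x_2 = 1/x^2$ satisfy $x_1 x_2 = 1$ off $\{0,\infty\}$, i.e.\ the two $x_i$-lines are glued by an explicit isomorphism, and the affine substitutions of (\ref{Jacobi-Ei})--(\ref{E1-E2}) are simultaneously normalizable because a scaling $x \mapsto \lambda x$ on $C$ rescales both $x_1$ and $x_2$ consistently.

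Having fixed the coordinates, the remaining step is to identify $C$ with the normalization of $\bar C = E_1 \times_{\PP^1} E_2$ as in (\ref{fib_prod}). The universal property of the fibered product gives a canonical morphism $C \to \bar C$ (from $\pi_1, \pi_2$ and $\phi_1 \pi_1 = \phi_2 \pi_2$); since $C$ is smooth and this map is birational onto its image (generically $2{:}1$ fails — in fact it is degree $1$ because $y_1$ on $C$ already determines the $\bar C$-branch via $\sqrt{f_1(y_1)}y_1 = \pm\sqrt{f_2(y_2)}y_2$, and $C$ has the right genus $2$), it factors through the normalization $\nu : \tilde{\bar C} \to \bar C$ and, both being smooth projective of the same function field, $C \cong \tilde{\bar C}$. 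The local computation already given in the excerpt (writing $x = f_i(y_i) y_i^2$ near $x=0$ and observing $\bar C$ is locally the union of two smooth transversal branches) confirms $\bar C$ has exactly nodes over $0,1,\infty$ and is smooth elsewhere, so the normalization is obtained by ``divorcing branches'' at those three nodes, matching the description of $C$ with its points $0_\pm, 1_\pm, \infty_\pm$.

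The main obstacle I anticipate is the simultaneous-normalization bookkeeping in the middle paragraph: one must be careful that the freedom used to send $\phi_1$'s branch points to $0,1,\infty$ is not independently available for $\phi_2$, and that after using the residual $\CC^*$-scaling on $x$ the two Legendre parameters $t_1, t_2$ are genuinely well-defined and distinct. Everything else is either the universal property of fibered products or the explicit local chart computation, both of which are routine.
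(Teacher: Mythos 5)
Your argument is sound, but there is nothing in the paper to compare it against: the paper's entire proof of Proposition \ref{fp} is the single line ``See \cite{Di}.'', so you have in effect supplied the argument that the paper outsources to Diem. What you wrote is consistent with, and partly duplicates, the discussion the paper places \emph{before} the proposition (the local chart computation $x=f_i(y_i)y_i^2$ showing $\bar C$ has exactly three nodes over the common branch points, and the description of the normalization as ``divorcing branches''), and your middle paragraph correctly isolates the one step that is not already in that discussion, namely the simultaneous normalization. To make that step airtight: in the Jacobi coordinates the two target lines are glued by $x_2=1/x_1$, under which $\phi_1$ is branched over $\{r_1,r_2,r_3,\infty\}$ and $\phi_2$ over $\{r_1,r_2,r_3,0\}$, where $r_1,r_2,r_3$ are the (distinct, nonzero) roots of $c_0u^3+c_1u^2+c_2u+c_3$; a single M\"obius map sending $(r_1,r_2,r_3)$ to $(0,1,\infty)$ then sends $\infty$ and $0$ to two further points $t_1\neq t_2$ lying outside $\{0,1,\infty\}$, which is exactly the form (\ref{E1-E2}). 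Two small points deserve an explicit word in your last paragraph. First, the birationality of the canonical map $C\to\bar C$ is cleanest via function fields: $\CC(\bar C)=\CC(x_1,y_1,y_2)$ pulls back to contain $x=x_1\cdot(y_2/y_1)\cdot x_1^{?}$ --- concretely, $y_1/y_2=x^3$ and $x_1=x^2$ recover $x=(y_1/y_2)/x_1$, hence all of $\CC(C)$, so the map has degree $1$; your parenthetical ``generically $2{:}1$ fails'' is garbled and the appeal to the genus is not needed. Second, you should say explicitly that $\bar C$ is irreducible, which is where $t_1\neq t_2$ is genuinely used (the two quadratic extensions $\CC(x)(y_1)$ and $\CC(x)(y_2)$ must be distinct, else the fibered product splits into two components and $C\to\bar C$ could not be the normalization of all of $\bar C$). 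With those two clarifications your proof is complete.
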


\begin{proof}
See \cite{Di}.
\end{proof}

It is curious to know, how the descriptions given by \eqref{Jacobi} and 
Proposition \ref{fp} are related to each other. The answer is given by
the following proposition.

\begin{proposition}\label{deg6eq}
Under the assumptions and in the notation of Proposition \ref{fp},
apply the following changes of coordinates in the equations of
the curves $E_i$: 
$$
(x_i,y_i)\rar (\tilde x_i, \tilde y_i),\ \ 
\tilde x_i=\frac{x_i-t_j}{x_i-t_i},\ \ 
\tilde y_i= \frac{y_i}{(x_i-t_i)^2}\sqrt{\frac{(t_j-t_i)^3}{t_i(1-t_i)}},
$$
where $j=3-i$, $i=1,2$, so that $\{i,j\}=\{1,2\}$. Then the equations of
$E_i$ acquire the form
\begin{equation}\label{eqEi}\begin{array}{ll}
E_1:\ \ \tilde y_1^2=\left(\tilde{x_1}-\dfrac{t_2}{t_1}\right)\
\left(\tilde{x_1}-\dfrac{1-t_2}{1-t_1}\right)(\tilde{x_1}-1),\\
E_2:\ \ \tilde y_2^2=\left(1-\dfrac{t_2}{t_1}\tilde{x_2}\right)
\left(1-\dfrac{1-t_2}{1-t_1}\tilde{x_2}\right)(1-\tilde{x_2}).
\end{array}\end{equation}
Further, $C$ can be given by the equation
\begin{equation}\label{eqC}
\eta^2=\left(\xi^2-\frac{t_2}{t_1}\right)
\left(\xi^2-\frac{1-t_2}{1-t_1}\right)(\xi^2-1),
\end{equation}
and the maps $\pi_i:C\rar E_i$ by $(\xi,\eta)\mapsto 
(\tilde x_i, \tilde y_i)$, where
$$
(\tilde x_1, \tilde y_1)=(\xi^2,\eta),\qquad
(\tilde x_2, \tilde y_2)=(1/\xi^2,\eta/\xi^3).
$$
\end{proposition}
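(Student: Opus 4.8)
The plan is to verify Proposition~\ref{deg6eq} by a direct but carefully staged change-of-variables computation, starting from the model \eqref{E1-E2} furnished by Proposition~\ref{fp}. First I would check that the substitution $\tilde x_i = \tfrac{x_i - t_j}{x_i - t_i}$ is a well-defined M\"obius transformation (its determinant is $t_j - t_i \ne 0$ by the hypothesis $t_1 \ne t_2$), compute the images of the three branch points $0$, $1$, $\infty$ under it, and thereby identify the three roots appearing on the right-hand side of \eqref{eqEi}. Explicitly $x_i = 0 \mapsto t_j/t_i$, $x_i = 1 \mapsto (1-t_j)/(1-t_i) \cdot$ (up to the sign of the numerator/denominator convention; one must track this), and $x_i = t_i \mapsto \infty$, while $x_i = t_j \mapsto 0$; so the finite branch locus $\{0,1,t_i\}$ is sent to $\{t_j/t_i,\ (1-t_j)/(1-t_i),\ \infty\}$. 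The only subtlety here is bookkeeping: for $i=1$ one gets the curve written in the ``$\tilde x_1 - c$'' form and for $i=2$ the ``$1 - c\tilde x_2$'' form, which is just the same cubic read off with $\tilde x_2 = 1/(\cdot)$ absorbed, matching the original asymmetry already visible in \eqref{Jacobi-Ei}.

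Next I would pin down the multiplier in $\tilde y_i = \tfrac{y_i}{(x_i-t_i)^2}\sqrt{\tfrac{(t_j-t_i)^3}{t_i(1-t_i)}}$. The point is dimensional: pulling $y_i^2 = x_i(x_i-1)(x_i-t_i)$ through the M\"obius map introduces a factor $(x_i - t_i)^{-3}$ times a product of linear factors in $\tilde x_i$, and one chooses the scaling of $\tilde y_i$ precisely to clear the resulting constant and normalize the leading coefficient (here to make the third factor exactly $(\tilde x_1 - 1)$, resp. $(1 - \tilde x_2)$). So I would substitute, collect the constant $\prod$ over the three linear factors evaluated appropriately, and confirm it equals $\tfrac{t_i(1-t_i)}{(t_j-t_i)^3}$, which is exactly the reciprocal of what sits under the square root — this is the computation that forces the stated formula for $\tilde y_i$ rather than leaving it as a free guess.

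With \eqref{eqEi} established, I would then produce \eqref{eqC}. Recall from Proposition~\ref{fp} that $C$ is the normalization of $\bar C = E_1 \times_{\PP^1} E_2$ with the two projections $\pi_i$, and from \eqref{Jacobi-Ei}/\eqref{fib_prod} that on $\bar C$ the coordinate on $\PP^1$ pulls back to $x_1$ on $E_1$ and to $x_2$ on $E_2$ with $x_1 = x^2$, $x_2 = 1/x^2$ in Jacobi coordinates. Transporting through the M\"obius changes, $\tilde x_1$ and $\tilde x_2$ become related by $\tilde x_2 = 1/\tilde x_1$ (since the two M\"obius maps are built from $t_1 \leftrightarrow t_2$ and the base coordinate is shared), so setting $\xi^2 := \tilde x_1$ gives $\tilde x_2 = 1/\xi^2$, and $\eta := \tilde y_1$ then satisfies \eqref{eqC} directly by reading \eqref{eqEi} with $\tilde x_1 = \xi^2$. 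The compatibility $\tilde y_2 = \eta/\xi^3$ is checked by comparing the two expressions for $\tilde y_i$ in terms of $y$ and $x$ (using $y_2 = y/x^3$ from \eqref{Jacobi-Ei}) and the symmetry $i \leftrightarrow j$ in the multiplier; this is again pure bookkeeping.

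The genuinely delicate point — and the one I would flag as the main obstacle — is not any single algebraic identity but keeping the square-root branches and the node-resolution consistent throughout: $\sqrt{(t_j-t_i)^3/(t_i(1-t_i))}$ is only defined up to sign, the normalization $\nu: C \to \bar C$ separates two branches at each of $0,1,\infty$, and one must be sure the displayed maps $(\xi,\eta)\mapsto(\tilde x_i,\tilde y_i)$ are the honest degree-$2$ covers $\pi_i$ and not some twist. I would handle this by fixing one branch of the square root once and for all, checking that \eqref{eqC} defines a smooth genus-$2$ curve (the six roots $\pm\sqrt{t_2/t_1}$, $\pm\sqrt{(1-t_2)/(1-t_1)}$, $\pm1$ are distinct exactly when $t_1 \ne t_2$ and $t_i \notin\{0,1\}$), and confirming $\pi_i$ has the right branch divisor, so that by Proposition~\ref{fp} and the uniqueness of the normalization the constructed object must coincide with $C$. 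Everything else reduces to the two explicit substitutions above.
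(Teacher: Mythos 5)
Your plan is correct, and the specific identities you flag do check out: pushing $y_i^2=x_i(x_i-1)(x_i-t_i)$ through $\tilde x_i=\frac{x_i-t_j}{x_i-t_i}$ produces exactly the constant $\frac{t_i(1-t_i)}{(t_j-t_i)^3}$, which the stated $\tilde y_i$ cancels; the two M\"obius maps are mutually reciprocal on the shared base coordinate, so $\tilde x_2=1/\tilde x_1=1/\xi^2$; and the six roots of \eqref{eqC} are pairwise distinct under the standing hypotheses. Your route runs in the opposite direction from the paper's, though it rests on the same coordinate $\xi$. The paper argues top-down from the commutative diagram of double covers: it takes the intermediate $\PP^1$ to be the Riemann surface of $\sqrt{(x-t_2)/(x-t_1)}$, so $\xi^2=\frac{x-t_2}{x-t_1}$, computes the branch locus of $f\colon C\to\PP^1$ as $\tilde\phi^{-1}(\{0,1,\infty\})=\{\pm1,\pm\sqrt{\tfrac{1-t_2}{1-t_1}},\pm\sqrt{\tfrac{t_2}{t_1}}\}$, writes \eqref{eqC} directly from that branch divisor, and only then recovers \eqref{eqEi} by the Jacobi recipe \eqref{Jacobi-Ei}, declaring the reconciliation with \eqref{E1-E2} an easy exercise. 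You instead verify \eqref{eqEi} first by direct substitution into \eqref{E1-E2} and then assemble \eqref{eqC} from the fiber product. What your version buys is that the one non-obvious ingredient --- the normalizing square-root factor in $\tilde y_i$, which the paper never actually derives --- is forced by the computation rather than asserted, and you are more careful than the paper about the sign/branch choices in the normalization. What the paper's version buys is a conceptual explanation of where $\tfrac{t_2}{t_1}$, $\tfrac{1-t_2}{1-t_1}$, $1$ come from: they are the values of $\xi^2$ over the three common branch points $0,1,\infty$, i.e. the nodes of $\bar C$. Both arguments are complete modulo the same routine algebra.
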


\begin{proof}
We have the following commutative diagram of double cover maps
\begin{equation}
\label{over-fields:diagram}
\xymatrix{
 &\ar_{\pi_1}[dl] C \ar_{f}[d] \ar^{\pi_2}[dr]\\
E_1 \ar_{\phi_1}[dr] & \ar_{\tilde{\phi}}[d] {\PP^1} & E_2\ar^{\phi_2}[dl] \\
& {\PP^1}
}
\end{equation}
in which the branch loci of $\phiti$, $\phi_i$, $f$, $\pi_i$
are respectively $\{t_1,t_2\}$, $\{0,1,t_i,\infty\}$, $\phiti^{-1}
(\{0,1,\infty\})$, $\phi_i^{-1}(t_j)$ ($j=3-i$). 
Thus the $\PP^1$ in the middle of the diagram can be viewed as the Riemann
surface of the function $\sqrt{\frac{x-t_2}{x-t_1}}$, where $x$ is the coordinate
on the bottom $\PP^1$. 
We introduce
a coordinate $\xi$ on the middle $\PP^1$ in such a
way that $\phiti$ is given by $\xi\mapsto x$, $\xi^{2}=\frac{x-t_2}{x-t_1}$.
Then $C$ is the double cover of $\PP^1$ branched in the 6 points
$\phiti^{-1}
(\{0,1,\infty\})=\{\pm1,\pm\sqrt{\frac{1-t_2}{1-t_1}},\pm\sqrt{\frac{t_2}{t_1}}\}$,
which implies the equation \eqref{eqC} for $C$. Then we deduce the
equations of $E_i$ in the form \eqref{eqEi} following the recipe of \eqref{Jacobi-Ei}, and it is an easy exercise to transform them into
\eqref{E1-E2}.
\end{proof}

From now on, we will stick to a representation of $C$ in the 
classical form $y^2=F_6(\xi)$, where $F_6$ is a degree-6 polynomial.
We want that $E$ is given the Legendre equation
$
y^2=x(x-1)(x-t),
$
but $F_6$ is not so bulky as in \eqref{eqC}. Of course, this can be done
in many different ways. We will fix for $C$ and $f$ the following choices:

\begin{eqnarray}\label{newsetting}%
f:C=\{y^2=(t'-{\xi^2})(t'-1-{\xi^2})(t'-t-{\xi^2)}\} & \ra &  
E=\{y^2=x(x-1)(x-t)
)\}\nonumber \\ 
(\xi,y) &  \mapsto& (x,y)=(t'-{\xi^2}, y)
\end{eqnarray}

\begin{lemma}
For any bielliptic curve $C$ with an elliptic subcover $f:C\rar E$ of degree
$2$, there exist affine coordinates $\xi,x,y$ on $C,E$ such that $f,C,E$ are given
by (\ref{newsetting}) for some $t,t'\in \CC\setminus \{0, 1\}$,
$t\neq t'$.
\end{lemma}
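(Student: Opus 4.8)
The plan is to deduce this normal form directly from Proposition~\ref{deg6eq}, whose output \eqref{eqEi}--\eqref{eqC} already exhibits $C$ as a double cover $\eta^2=(\xi^2-a)(\xi^2-b)(\xi^2-1)$ of $\PP^1$ with the map $\pi_1$ of the shape $(\xi,\eta)\mapsto(\xi^2,\eta)$. Writing $a=t_2/t_1$ and $b=(1-t_2)/(1-t_1)$, the target curve $E_1$ from \eqref{eqEi} is $\tilde y_1^2=(\tilde x_1-a)(\tilde x_1-b)(\tilde x_1-1)$; the only task is to move this to the Legendre form $y^2=x(x-1)(x-t)$ and to rewrite the $\xi$-coordinate accordingly so that the cover acquires the precise shape \eqref{newsetting}. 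So the proof is a change of coordinates bookkeeping, together with the verification that the resulting parameters $t,t'$ genuinely lie in $\CC\setminus\{0,1\}$ with $t\neq t'$.

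First I would apply the affine substitution on the base $\PP^1$ of $E_1$ that sends the three branch points $a,b,1$ of $E_1$ to $0,1,t$: explicitly, an affine map $x_1\mapsto x=\la(x_1-a)$ with $\la=1/(b-a)$ sends $a\mapsto 0$, $b\mapsto 1$, and $1\mapsto t:=\la(1-a)=(1-a)/(b-a)$; correspondingly $\tilde y_1$ is rescaled by $\la^{3/2}$ to restore the monic Legendre form $y^2=x(x-1)(x-t)$. I would check $t\notin\{0,1\}$: $t=0$ would force $a=1$, i.e. $t_1=t_2$, and $t=1$ would force $a=b$, i.e. again $t_1=t_2$ after a short computation — both excluded by \eqref{E1-E2}. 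Pulling this coordinate change back through $\pi_1:(\xi,\eta)\mapsto(\xi^2,\eta)$, the new base coordinate on the middle $\PP^1$ is related to $\xi$ by $x=\la(\xi^2-a)$; writing $\xi'^2 = \la \xi^2$ (absorbing $\la$ into a rescaling of $\xi$, which is harmless as it is just another affine coordinate on the middle line) and setting $t':=\la a = a/(b-a)$, one gets $x=t'-\xi'^2$ up to sign of $\xi'^2$; matching signs with \eqref{newsetting} is a final rescaling. Then the three factors $(\xi^2-a)(\xi^2-b)(\xi^2-1)$ become, after clearing the constant $\la^3$ into $\eta$, precisely $(t'-\xi'^2)(t'-1-\xi'^2)(t'-t-\xi'^2)$, which is the equation of $C$ in \eqref{newsetting}, and $f$ becomes $(\xi',y)\mapsto(x,y)=(t'-\xi'^2,y)$.

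The last point is to confirm $t'\in\CC\setminus\{0,1\}$ and $t'\neq t$. Here $t'=a/(b-a)$, so $t'=0 \iff a=0 \iff t_2=0$, excluded; $t'=1 \iff a=b-a \iff 2a=b$, which I would check is compatible with $E_1\not\simeq\PP^1$ but would note that if it happens to be forbidden it is only by a codimension-one condition that can be avoided by the remaining freedom in choosing the Legendre representation \eqref{E1-E2} of the pair $(E_1,E_2)$ — more cleanly, one observes $t'-t = (a-(1-a))/(b-a) = (2a-1)/(b-a)$, and the genuine constraint is $t\neq t'$, equivalent to $a\neq 1/2$, i.e. $t_1+t_2\neq 2t_1t_2$; this can be arranged because the representation \eqref{E1-E2} of the unordered pair $(E_1,E_2)$ is not unique (one may swap $E_1\leftrightarrow E_2$, i.e. $t_1\leftrightarrow t_2$, or apply the three-fold symmetry $t_i\mapsto 1-t_i$, $t_i\mapsto 1/t_i$ of the Legendre $\la$-line simultaneously), giving enough room to dodge the single bad value. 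The main obstacle, then, is not any serious mathematics but rather the careful tracking of constants and signs through the chain of substitutions, and — more substantively — verifying that the degenerate loci $t\in\{0,1\}$, $t'\in\{0,1\}$, $t=t'$ are exactly the ones already excluded by the bielliptic hypothesis (genus exactly $2$, so the six Weierstrass points $\pm\sqrt{t'},\pm\sqrt{t'-1},\pm\sqrt{t'-t}$ are distinct), so that the asserted ranges of $t,t'$ are attained.
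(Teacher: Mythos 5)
Your argument is correct, but it runs in the opposite direction from the paper's. The paper starts from the family \eqref{newsetting} and shows that its two elliptic subcovers (the visible $E:y^2=x(x-1)(x-t)$ and the complementary one, computed via \eqref{Jacobi-Ei} to be $y^2=x(x-1)(x-t)(x-t')$) sweep out all pairs of elliptic curves as $(t,t')$ varies, then invokes Proposition~\ref{fp} to conclude that every bielliptic $(C,f,E)$ occurs; it is a surjectivity-onto-moduli argument. You instead take an arbitrary $(C,f,E)$, put it in the normal form of Proposition~\ref{deg6eq}, and transport that form into \eqref{newsetting} by an explicit affine change of coordinates on the base of $E_1$, pulled back through $\pi_1$. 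Your route is more constructive and arguably tighter: the paper's ``run over the whole moduli space independently'' only controls the $j$-invariants of the two subcovers, whereas your computation exhibits the isomorphism of covers directly, with explicit formulas for $t$ and $t'$ in terms of $a=t_2/t_1$ and $b=(1-t_2)/(1-t_1)$.

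One caveat on your last paragraph. Once you carry out the sign normalization you yourself flag (the map in \eqref{newsetting} is $x=t'-\xi^2$, with coefficient $-1$ on $\xi^2$, so you must set $\xi'^2=-\lambda\xi^2$ and hence $t'=-\lambda a=-a/(b-a)$, not $+a/(b-a)$), the degeneracy analysis changes: $t-t'=(1-a)/(b-a)+a/(b-a)=1/(b-a)$ is automatically nonzero, and $t'=1$ becomes equivalent to $b=0$, i.e.\ $t_2=1$, which is excluded by \eqref{E1-E2}. So the ``single bad value $a=1/2$'' that you propose to dodge by exploiting the symmetries of the Legendre representation is an artifact of the dropped sign; with the correct sign, all the conditions $t,t'\in\CC\setminus\{0,1\}$ and $t\neq t'$ follow automatically from $t_1,t_2\in\CC\setminus\{0,1\}$, $t_1\neq t_2$, and no extra maneuver is needed. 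That is just as well, because the particular escape route of swapping $E_1\leftrightarrow E_2$ is not actually available to you: the lemma fixes which subcover is the target of $f$, so you may only use the relabelings of $\{0,1,\infty\}$ on each Legendre model, not the exchange of the two factors.
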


\begin{proof}
By Proposition \ref{fp}, it suffices to verify that the two
elliptic subcovers $E,E'$ of the curves $C$ given by \eqref{newsetting},
as we vary $t,t'$, run over the whole moduli space of elliptic curves
independently from each other. $E'$ can be determined from \eqref{Jacobi-Ei}.
It is a double cover of $\PP^1$ ramified at $\frac{1}{t'},
\frac{1}{t'-1},\frac{1}{t'-t},\infty$. This quadruple can be sent
by a homographic transformation to $0,1,t,t'$, hence $E'$ is given
by $y^2=x(x-1)(x-t)(x-t')$. If we fix $t$ and let vary $t'$, we will
obviously obtain all the elliptic curves, which ends the proof.
\end{proof}

The only branch points of $f$ in $E$ are 
$p_\pm =(t', \pm y_0)$, where $y_0=\sqrt{t'(t'-1)(t'-t)}$,
and thus the ramification points of $f$ in $C$ are 
$\tilde{p}_{\pm} =(0, \pm y_0)$. In particular, $f$
is non-ramified at infinity and the preimage of $\infty\in E$ is a
pair of points $\infty_\pm\in C$.\ \

We remark that if $\phi:C\ra E$ is a cover of degree $n$; then
there are two points $P_1,P_2$ in $C$ such that their multiplicity $e_{\phi}(P_i)=2, i=1..2$
and $e_{\phi}(Q)=1,\forall Q\in C\setminus\{P_1,P_2\}$ such that their images
are either different or the same. Otherwise, there is a point $P\in C$ with $e_{\phi}(P)=3$
and $e_{\phi}(Q)=1,\forall Q\in C\setminus\{P\}$. Thus $\phi$ is branched
in one point in two cases and two points in the other case. 

Let $\iota:C\ra C$ be the hyperelliptic involution map of $C$ whose fixed points are Weierstrass points $W$ contained
in the two-torsions points of $E$. We can specify the latter according to the 
degree of the cover. If its degree is odd, then $\phi(W)=E[2]$ and for any $Q\in E[2]$,
the cardinal of $\phi^{-1}(Q)\cap W\cong 1 (2)$. If $\deg\phi=2k$, then $\phi(W)\subset E[2]$
and and for any $Q\in E[2]$, the cardinal of $\phi^{-1}(Q)\cap W\cong 0 (2)$.

\section{Shioda-Inose structure}
\begin{definition}
A $K3$ surface $X$ admits a Shioda-Inose structure if there exists a Nikulin involution $\iota:X\ra X$
such for any holomorphic (2,0)-form $\omega$, then $\iota^{*}(\omega)=\omega$ and there is a rational quotient
map $\phi:X\dashrightarrow Y$ such that $Y$ is a Kummer surface with in addition a cocycle condition $T_X(2)\simeq T_Y$ which
expresses the isomorphism of transcendal lattices induced by the map $\phi_*$.
Moreover, this induces both degree-$2$ rational maps $\phi_1:X\dashrightarrow Y$ and $\phi_2:Z\dashrightarrow Y$, where
$Y$ is the Kummer surface associated to the 2-dimensional complex torus $Z$.
\begin{equation}
\label{over-fields:diagram}
\xymatrix{
X \ar_{\phi_1}[dr] &  & Z\ar^{\phi_2}[dl] \\
& {Y}
}
\end{equation}

\end{definition}

We give a criterion to determine if a $K3$ surface $X$ admits a Shioda-Inose structure.
\begin{theorem}
An algebraic $K3$ surface $X$ admits a Shioda-Inose structure if there exists a lattice primitive embedding
$T_X\hookrightarrow U^{\oplus 3}$.
\end{theorem}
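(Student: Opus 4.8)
The plan is to invoke Nikulin's classification of even lattices together with Morrison's characterization of Shioda–Inose structures. The statement to prove is: if $T_X$ admits a primitive embedding into $U^{\oplus 3}$, then $X$ admits a Shioda–Inose structure. Here $T_X$ is the transcendental lattice of the algebraic $K3$ surface $X$, so it is an even lattice of signature $(2,20-\rho)$ with $\rho=\rho(X)$; since $X$ is algebraic, $2+\,(20-\rho)\le 22$ and $\rho\ge 1$, so $\operatorname{rk} T_X\le 21$. The strategy has three movements: first, reduce the existence of a Shioda–Inose structure to a purely lattice-theoretic condition on $T_X$; second, verify that condition using the hypothesis $T_X\hookrightarrow U^{\oplus3}$; third, assemble the geometry.

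\textbf{Step 1 (Lattice reduction).} Recall Morrison's theorem: a $K3$ surface $X$ admits a Shioda–Inose structure if and only if there is a primitive embedding $T_X\hookrightarrow U^{\oplus3}$, equivalently if and only if $X$ admits a Nikulin involution $\iota$ with $\iota^*\omega=\omega$ whose quotient-resolution $Y$ is a Kummer surface satisfying $T_X(2)\simeq T_Y$. The ``if'' direction is exactly our theorem, so I would reproduce the mechanism behind it rather than cite it as a black box. The key intermediate fact is that a Nikulin involution on a $K3$ surface $X$ exists, with $T_X$ mapping isomorphically (up to the scaling $T_X(2)$) onto the transcendental lattice of the resulting Kummer surface, provided that $NS(X)$ contains a copy of $E_8(-1)$ primitively embedded, or more precisely provided the orthogonal complement construction goes through. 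So the first reduction is: produce from the embedding $T_X\hookrightarrow U^{\oplus3}$ an embedding of $T_X$ into the $K3$ lattice $L_{K3}=U^{\oplus3}\oplus E_8(-1)^{\oplus2}$ whose orthogonal complement contains $E_8(-1)$ as a direct summand and is compatible with $NS(X)$.

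\textbf{Step 2 (Using the hypothesis).} Given the primitive embedding $j:T_X\hookrightarrow U^{\oplus3}$, consider the composite $T_X\hookrightarrow U^{\oplus3}\hookrightarrow U^{\oplus3}\oplus E_8(-1)^{\oplus2}=L_{K3}$. The orthogonal complement of $j(T_X)$ inside $U^{\oplus3}$, call it $M$, satisfies $M\oplus E_8(-1)^{\oplus2}\hookrightarrow T_X^{\perp}\subset L_{K3}$, and in fact, because $j$ is primitive, $T_X^{\perp}=M\oplus E_8(-1)^{\oplus2}$ up to finite index issues governed by the discriminant form — here I would use Nikulin's results on unique primitive embeddings (the discriminant forms of $T_X$ and its complement are anti-isometric, and $E_8(-1)$ is unimodular so it splits off cleanly). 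Thus $NS(X)\supseteq T_X^{\perp}$ contains $E_8(-1)^{\oplus2}$, hence certainly a single $E_8(-1)$ summand; this is precisely the condition (Morrison, van Geemen–Sarti) that guarantees a Nikulin involution on $X$ whose quotient is a Kummer surface and for which the transcendental lattice scaling relation $T_X(2)\simeq T_Y$ holds. One must also check that the embedding can be chosen so that the image contains a pseudo-ample class, i.e. is compatible with the Hodge/ample data of $X$ — this follows from surjectivity of the period map for $K3$ surfaces and the fact that $NS(X)$ already sits inside $L_{K3}$ with the correct signature $(1,\rho-1)$.

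\textbf{The hard part} will be Step 2's discriminant-form bookkeeping: ensuring that the two $E_8(-1)$ factors really split off the orthogonal complement and that the resulting overlattice structure on $T_X^{\perp}$ is the one realized by $NS(X)$ of an actual $K3$ with the prescribed transcendental lattice. This is where Nikulin's existence and uniqueness criteria for primitive embeddings of even lattices (in terms of length of the discriminant group, signatures, and $p$-adic invariants) do the real work; one needs $\operatorname{rk} T_X + \ell(A_{T_X}) \le 21$ or the analogous inequality, which is automatic from $\operatorname{rk}T_X\le 21$ and the fact that $T_X$ embeds in the unimodular $U^{\oplus3}$ (so its discriminant form is constrained). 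Once the lattice input is secured, the geometry — constructing the Nikulin involution $\iota$ from the $E_8(-1)$ in $NS(X)$, blowing up its eight fixed points, passing to the quotient $Y$, recognizing $Y$ as Kummer via $T_Y\simeq T_X(2)$ and Nikulin's characterization of Kummer surfaces, and reading off the diagram with $Z$ the abelian surface such that $Y=\operatorname{Km}(Z)$ — is then formal and I would present it briefly, citing the surjectivity of the period map to realize every step by an honest surface.
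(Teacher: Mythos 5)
The paper does not actually supply an argument here: its ``proof'' is the single line ``See Theorem $6.3$ of [Mor]'', i.e.\ the statement is Morrison's criterion quoted verbatim. Your proposal, by contrast, unfolds the proof of that cited theorem, and the outline you give is the correct (and standard) one: use Nikulin's uniqueness of primitive embeddings into the unimodular $K3$ lattice to identify the composite $T_X\hookrightarrow U^{\oplus3}\hookrightarrow U^{\oplus3}\oplus E_8(-1)^{\oplus2}$ with the geometric embedding given by a marking, conclude that $NS(X)\cong T_X^{\perp}$ contains $E_8(-1)^{\oplus2}$, build the Nikulin involution from the isometry exchanging the two $E_8(-1)$ summands via the Torelli theorem, and recognize the quotient as a Kummer surface from $T_Y\simeq T_X(2)$ together with Nikulin's characterization. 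Two small remarks: (i) your worry about ``finite index issues'' in computing $T_X^{\perp}$ is unnecessary --- since $T_X$ sits primitively inside the summand $U^{\oplus3}$, its orthogonal complement in the full $K3$ lattice is exactly $M\oplus E_8(-1)^{\oplus2}$ with $M=T_X^{\perp}\cap U^{\oplus3}$, with no overlattice correction; the genuinely delicate point is instead the uniqueness statement that lets you match this abstract complement with the actual $NS(X)$, and you do correctly flag Nikulin's embedding theorems as the tool for that. (ii) The Nikulin involution is not ``constructed from a single $E_8(-1)$'' but from the swap of the two copies (extended by the identity on their complement); your phrasing is loose there but the surrounding description shows you have the right mechanism in mind. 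In short: correct in outline, and strictly more informative than the paper, which delegates everything to the reference.
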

\begin{proof}
See Theorem $6.3$ of \cite{Mor}.
\end{proof}

We come back our bielliptic curve $C$ arising from its elliptic subcovers $E$ and $E'$, and
investigate the Shioda-Inose structure.

\begin{theorem}
In an abstract way, we can construct a bielliptic curve $C$ from its elliptic
subcovers $E$ and $E'$ and conversely from the following commutative diagram
$$\xymatrix{ C\ar@{^{(}->}[r]^{\alpha} \ar@{^{(}->}[d]_{f\times f'} & JC   \\ E\times E'  \ar[ur]_{f^{*}+f'^{*}}} $$
in which $f^{*}+f'^{*}$ is an isogeny of degree $2$ and $f\times f'(C)$ is the graph of a $(2, 2)$ correspondence between  $E, E'$.
From this, we have
$$\xymatrix{SI(J(C))\ar[r]^{2:1} \ar[d]_{2:1} & Kum(J(C))   \\ Kum(J(E)\times J(E'))  \ar[ur]_{2:1}},$$
where $SI(JC)$ denotes the Jacobian of $C$ endowed with a Shioda-Inose structure corresponding to a $N$-polarized $K3$ surface
without extension.
\end{theorem}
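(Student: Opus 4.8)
The plan is to produce the two displayed diagrams by combining classical facts about bielliptic curves (Sections \ref{g2-covers}) with the Shioda--Inose machinery recalled above. First I would construct the upper triangle. Since $C\to E$ and $C\to E'$ are the two degree-$2$ elliptic subcovers provided by Proposition \ref{fp} and the Lemma following \eqref{newsetting}, the product map $f\times f':C\to E\times E'$ is well-defined, and I claim it is a closed embedding: a non-injective or non-immersive point would force $C$ to factor through a common quotient of $E$ and $E'$, contradicting $t\neq t'$ (the two subcovers are genuinely distinct, as shown in the proof of the Lemma). Composing with $\alpha$ amounts to the Abel--Jacobi map, and by functoriality of Jacobians the pullback $f^{*}\oplus f'^{*}:E\times E'=JE\times JE'\to JC$ is an isogeny; its degree is computed from the intersection of the two sublattices $f^{*}H^1(E)$ and $f'^{*}H^1(E')$ inside $H^1(C)$, and the standard computation for a $(2,2)$-split Jacobian gives degree $2$. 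The statement that $f\times f'(C)$ is the graph of a $(2,2)$-correspondence on $E\times E'$ is then the assertion that $C$, viewed in $E\times E'$, has class $(2,2)$ in $\mathrm{NS}(E\times E')=\ZZ[E\times\{p\}]\oplus\ZZ[\{q\}\times E']\oplus\dots$, which follows from $\deg\pr_i|_C=\deg\pi_i=2$; this reproves, in the notation of this paper, the content of \cite{Di}.

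Next I would pass to $K3$ surfaces. The point is that $JC$ carries the principal polarization $\OOO_{JC}(\theta)$ and, by the Torelli/Hodge correspondence recalled in the introduction (with $N=U\oplus E_7\oplus E_8$, $T=U\oplus U\oplus(-2)$), the pair $(JC,\theta)$ corresponds to an $N$-polarized $K3$ surface \emph{without} extension, i.e. a point of $\FFF_2\setminus\HHH_1$; call this $K3$ surface $SI(JC)$. By construction $SI(JC)$ carries a Nikulin involution fixing the holomorphic $2$-form, and Theorem (Morrison, quoted above as Theorem in Section 2) applies because the transcendental lattice $T_X\cong T$ embeds primitively into $U^{\oplus 3}$; hence $SI(JC)$ admits a Shioda--Inose structure, giving the vertical and diagonal arrows of the second diagram with target $\mathrm{Kum}(JC)$. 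For the remaining arrow I would use the split situation: the $(2,2)$-isogeny $E\times E'\to JC$ induces a degree-$2$ rational map of Kummer surfaces $\mathrm{Kum}(JE\times JE')\dashrightarrow \mathrm{Kum}(JC)$ (Kummer is functorial for isogenies up to the usual resolution of the $16$ nodes), and the cocycle/transcendental-lattice identity $T_{SI(JC)}(2)\cong T_{\mathrm{Kum}(JC)}$ from the Shioda--Inose definition, together with $T_{\mathrm{Kum}(A)}\cong T_A(2)$ for an abelian surface $A$, makes all three maps degree $2$ and forces the triangle to commute at the level of transcendental lattices, hence (by Torelli) as rational maps.

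The main obstacle I expect is the commutativity of the second diagram, not the existence of its arrows. Each map is only a \emph{rational} degree-$2$ map between $K3$ (or Kummer) surfaces, so "commutes" has to be read as commutativity of the induced maps on transcendental lattices, after which Torelli for $K3$ surfaces upgrades it to honest commutativity of the rational maps; checking that the three lattice maps $T\to T(2)$, $T(2)\to T$, etc., are compatible — i.e. that going $SI(JC)\to\mathrm{Kum}(JE\times JE')\to\mathrm{Kum}(JC)$ agrees with the direct Shioda--Inose quotient $SI(JC)\to\mathrm{Kum}(JC)$ — is where the real work lies, and it rests on identifying $\mathrm{Kum}(JC)$ with $\mathrm{Kum}$ of the $(2,2)$-quotient via the isogeny constructed in the first diagram. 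A secondary (and more routine) point is verifying that $(JC,\theta)$ genuinely lands in $\FFF_2\setminus\HHH_1$ rather than on the Humbert surface $\HHH_1$: this is exactly the hypothesis that $C$ is a \emph{smooth} genus-$2$ curve (so $JC$ is not a product of elliptic curves with product polarization), which is built into the setup, and the "$N$-polarized without extension" bookkeeping then follows from the dictionary in the introduction.
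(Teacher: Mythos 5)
Your plan coincides with the paper's proof only in its middle portion: both you and the author use the explicit primitive embedding $T_{JC}=U^{\oplus 2}\oplus\langle -2\rangle\hookrightarrow U^{\oplus 3}$ to invoke Morrison's criterion, and both use the Hodge correspondence to identify $(JC,\OOO(\Theta))$ with an $N$-polarized $K3$ surface without extension. For the hardest step --- relating $Kum(JE\times JE')$ to $Kum(JC)$ --- the routes diverge. The paper first treats a ``generic'' case by noting that the Hodge isometries $T_X\simeq T_{JC}\simeq T_{E\times E'}$ make the two Kummer surfaces Fourier--Mukai partners and then quotes the main theorem of \cite{H-L-O-Y} to get $Kum(JC)\simeq Kum(E\times E')$, before falling back on Section 4 of \cite{Cl-D1} for a general bielliptic curve (where the transcendental lattices are only isometric over $\QQ$). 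You instead try to manufacture the bottom arrow directly from the isogeny and to force commutativity of the triangle by Torelli; this is a genuinely different strategy, and it is where your proposal breaks down.

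Three concrete steps fail. First, $f\times f'$ is not a closed embedding: if $f(P)=f(Q)$ and $f'(P)=f'(Q)$ with $Q\neq P$, then $Q=\sigma P=\sigma\iota P$, which forces $\iota P=P$; so $f\times f'$ identifies the six Weierstrass points in pairs, and the image is the $(2,2)$-correspondence curve with three nodes (arithmetic genus $5$, geometric genus $2$), not an isomorphic copy of $C$. Your argument that non-injectivity would make $C$ factor through a common quotient of $E$ and $E'$ does not rule this out. Second, the ``standard computation'' for $f^{*}+f'^{*}$ gives kernel $f^{*}JE\cap f'^{*}JE'\cong(\ZZ/2)^{2}$, the graph of an isomorphism $E[2]\to E'[2]$, hence an isogeny of degree $4$, not $2$; consequently the rational map $Kum(E\times E')\dashrightarrow Kum(JC)$ it induces has degree $4$, and your justification of the $2{:}1$ label on the bottom arrow collapses (this defect is arguably present in the statement itself, but your claimed computation does not deliver what you assert). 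Third, and most importantly, global Torelli for $K3$ surfaces converts a Hodge isometry of the full second cohomology into an isomorphism; it does not upgrade a commuting triangle of maps of transcendental lattices into a commuting triangle of degree-$2$ \emph{rational} maps. The commutativity you defer to ``where the real work lies'' is therefore not an expected technicality but the actual mathematical content of the theorem --- the part the paper imports from \cite{H-L-O-Y} and \cite{Cl-D1} --- and your proposal supplies no substitute for it.
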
 
\begin{proof}
We can first assume that $C$ is a generic smooth genus-$2$ curve (i.e. such that its Jacobian surface
has $\rho(JC)=1$) and prove in this case that the morphism  $Kum(J(E)\times J(E'))\ra Kum (J(C))$ is an isomorphism.
$JC$ is a principally polarized abelian variety and if $H$ is the principal
polarization, then we have $H^2=2$ and $T=T_{JC}=U^{\oplus 2}\oplus<-2>.$  
Note that $T$ embedds in $\Lambda$, then there exists a $K3$ surface $X$ such that
we have a Hodge isometry $T\simeq T_X$. We construct an embedding lattice from
$T_X\hookrightarrow U^{\oplus 3}$ in the following way, we send the first two copies
of $U\subset T_X$ to the corresponding ones of $U^{\oplus 3}$ and the remaining 
element $-2$ to $e_1^3-e_2^3$, where $(e_i^{j})_{j=1\dots 3,i=1,2}$, is a basis of $U^{\oplus 3}$.
Then by the previous theorem, $X$ admits a Shioda-Inose structure and we have the following diagram,
\begin{equation}
\label{over-fields:diagram}
\xymatrix{
X \ar_{\phi_1}[dr] &  & Z=E\times E'\ar^{\phi_2}[dl] \\
& {Y}
}
\end{equation}
with Hodge isometries $T_X\simeq T_{JC}$ and $T_Z\simeq T_{JC}$.
Then these are Fourier-Mukai partners and $Kum(JC)\simeq Kum(Z)$ after Main Theorem of \cite{H-L-O-Y}.
We at present consider the framework general of a smooth genus-$2$ curves.
As $JC$ and $E\times E'$ are only two-isogeneous, this only implies an isomorphism of $T_{JC}$ and $T_{E\times E'}$ over $\QQ$ not over
$\ZZ$; and we do not recover the isomorphism in the generic case.
The Hodge correspondence gives an isomorphism
between the set of isomorphism classes of $N$-polarized K3 surfaces without extension and this one of principally polarized abelian
varieties $(JC,\OOO(\Theta))$. This induces an isomorphism between the open region $\FFF_2\setminus\{\HHH_1\}$ and the moduli space of the genus-$2$ curves. Then we rely on the first parts of the Sect. $4$ of the paper of Clingher-Doran \cite{Cl-D1} to get the diagram given above. 
\end{proof}

For consequently, by lemma $4.1$ of \cite{Cl-D1}, the Shioda-Inose structure of the Jacobian of a smooth genus-$2$ curve is:
\begin{proposition}
The Shioda-Inose structure of the Jacobian of a smooth complex genus-$2$ curve is a
$K3$-surface $X(k,m,n)$ with transcendental Hodge isometric lattice $T(k,m,n)$, where
$T(k,m,n)=U(k)\oplus U(m)\oplus<-2n>.$ Moreover by Hodge correspondence, the $K3$-surface $X(k,m,n)$ is
represented by a principally polarized abelian surface of type $(1,n)$. 
\end{proposition}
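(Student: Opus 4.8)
The plan is to deduce the statement from the Shioda--Inose structure produced in the previous theorem together with the lattice dictionary of Clingher--Doran, and then to recover the type of the abelian surface from the discriminant of the transcendental lattice. Thus the proof splits into: (i) the defining case of a generic genus-$2$ Jacobian, handled directly with the criterion of \cite{Mor}; (ii) the general family $X(k,m,n)$, extracted from lemma $4.1$ of \cite{Cl-D1} and the Hodge correspondence recalled in the introduction; (iii) a check that the weight-two Hodge structure and the polarization type match on the two sides.

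First I would treat the case of a generic smooth genus-$2$ curve $C$, i.e. $\rho(JC)=1$. The principal polarization $\Theta$ satisfies $\Theta^2=2$, and since $H^2(JC,\ZZ)\simeq U^{\oplus 3}$ one obtains $T_{JC}=\langle\Theta\rangle^{\perp}=U\oplus U\oplus\langle-2\rangle$, that is $T_{JC}=T(1,1,1)$. This lattice admits a primitive embedding into $U^{\oplus 3}$ — exactly the one used in the proof of the previous theorem, sending the two hyperbolic planes to the first two copies and the class $\langle-2\rangle$ to $e_1^3-e_2^3$ — so by \cite{Mor} the surface $JC$ carries a Shioda--Inose structure, with associated $K3$ surface $X$ satisfying $T_X\simeq T_{JC}$ as weight-two Hodge structures. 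Hence $X=X(1,1,1)$, and by the Hodge correspondence of the introduction it is represented by $(JC,\OOO(\Theta))$, a principally polarized abelian surface, i.e. of type $(1,1)$.

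For the general family I would invoke lemma $4.1$ of \cite{Cl-D1}, which replaces the rank-one Néron--Severi lattice $\langle 2\rangle$ of a principally polarized abelian surface by the rank-one lattice $\langle 2n\rangle$ attached to an abelian surface $A$ of polarization type $(1,n)$: its orthogonal complement in $H^2(A,\ZZ)\simeq U^{\oplus 3}$ is again a rank-five even lattice of the shape $U(k)\oplus U(m)\oplus\langle-2n\rangle=T(k,m,n)$, the rescaled planes $U(k),U(m)$ encoding how the two hyperbolic planes of $U^{\oplus 3}$ meet the transcendental part of the Hodge structure of $A$. As before $T(k,m,n)$ embeds primitively in $U^{\oplus 3}$, so by \cite{Mor} there is a $K3$ surface $X(k,m,n)$ with $T_{X(k,m,n)}\simeq T(k,m,n)$ carrying a Shioda--Inose structure whose dual torus is precisely $A$; the Hodge correspondence then represents $X(k,m,n)$ by this $A$. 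The Hodge structure is the correct one because the Shioda--Inose rational maps give $T_{X(k,m,n)}(2)\simeq T_{Kum(A)}\simeq T_A(2)$, whence $T_{X(k,m,n)}\simeq T_A$ after dividing the quadratic form by $2$, so $A$ is indeed an abelian surface of type $(1,n)$.

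The main obstacle is the last bookkeeping point: one must check that the rank-one lattice $T(k,m,n)^{\perp}\subset U^{\oplus 3}$ is generated by a class of self-intersection $2n$ — and not merely of self-intersection equal to $|\mathrm{disc}\,T(k,m,n)|$ modulo the square factor contributed by $k$ and $m$ — so that the polarization it determines on the dual abelian surface is genuinely of type $(1,n)$ rather than of some type $(1,n')$ with $n'\neq n$. Controlling this amounts to computing the discriminant form of $U(k)\oplus U(m)\oplus\langle-2n\rangle$ inside $U^{\oplus 3}$ and matching it, through Nikulin's gluing theory, with the polarization induced on $A$; this is exactly the statement supplied by lemma $4.1$ of \cite{Cl-D1}, which I would quote rather than reprove, the remaining steps being the routine lattice computations above.
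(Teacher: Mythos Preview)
Your approach is essentially the paper's: the paper does not give a proof environment for this proposition at all, but simply prefaces it with ``For consequently, by lemma~$4.1$ of \cite{Cl-D1}'', so the entire argument is a citation. You cite the same lemma and supply the surrounding lattice computations (the explicit $T_{JC}=U\oplus U\oplus\langle-2\rangle$ in the generic case, the primitive embedding into $U^{\oplus 3}$, and the matching of discriminant forms), which is more than the paper itself writes down but is exactly the intended line of reasoning.

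One small point worth flagging: the statement as written is somewhat loose, since the Jacobian of a smooth genus-$2$ curve is \emph{principally} polarized, hence of type $(1,1)$, so the relevant instance is $n=1$ (and $k=m=1$ generically). Your step (ii), passing to general $(k,m,n)$ and type $(1,n)$, is really a statement about the broader family of $N$-polarized $K3$ surfaces in \cite{Cl-D1} rather than about genus-$2$ Jacobians per se; that is fine and matches how the paper reads the Clingher--Doran lemma, but you should be aware that the ``general family'' you describe is not literally a family of Jacobians of smooth genus-$2$ curves once $n>1$.
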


\section{Mordell-Weil group and lattice of an elliptic surface}
We want to determine the Mordell-Weil group and lattice of an elliptic surface in a particular case.
So, we need to put in place some materials to realize this.
We first need to know what happens at the level of the fibers after a change base on the new constructed elliptic surface
and to solve its singularities.

We start with an elliptic surface $S$ over a smooth curve $C$. The change base of $S$ requires
a smooth projective curve $B$ together with an onto morphism $\pi:B\ra C$ such that
the following diagram commutes 
\begin{equation}
\label{over-fields:diagram}
\xymatrix{ S'\ar[r]^{\tilde{f}}\ar[d]_{\rho}&B\ar[d]^{\pi}\\ S \ar[r]^{f} & C},
\end{equation}
where $S'=S\times_C B$ is the constructed elliptic surface.
If $S\ra C$ the elliptic surface has only smooth fibers then those translate into $S'$ by the same
smooth fibers whose the number copies is the degree of $\pi$.
In the case where it only appears in the elliptic surface $S$ of multiplicative fibers $I_n$ over unramified
points of $\pi$, then those translate into $S'$ in $I_{nd}$, where $d$ is the index of ramification of $\pi$.
In the case where it only appears of additive fibers, we make reference
to the Tate's table (See \cite{Sc-Sh}). 

We at present make refer to T.Shioda \cite{Sh1} and \cite{Sh2} to define in a first time Mordell-Weil groups and in a 
second time the Mordell-Weil lattices of an elliptic surface.
Let $K=\CC(C)$ be the function field on a smooth complex projective curve $C$.
Let $E$ be an elliptic curve defined over $K$ with a $K$-rational point.
Let $E(K)$ be the group of $K$-rational points of $E$, with origin $O$.
The Kodaira-N\'eron model of $E/K$ is an elliptic surface $f:S\ra C$, where
$S$ is a smooth complex projective surface and $f$ has no exceptional $(-1)$ curves.
So, the group $E(K)$ of $K$-rational points of $E$ called the Mordell-Weil group of $E$ can be indentified with the group
of sections of $f$. Each $P\in E(K)$ determines a section of $f$ is interpreted as a divisor on $S$, which
is a curve denoted $\bar{P}$.

Assume throught that $f$ has at least one singular fiber. Then $E(K)$ is a finitely
generated (Mordell-Weil theorem) and the N\'eron-Severi group $NS(S)$ of $S$ is none
than $\Pic(S)$, which becomes an integral lattice of finite rank with respect to the
intersection pairing $D.D'$. Denote us $T$ its sublattice generated by the zero section $(O)$,
by a general fiber $F$ and the irreducible components of fibers. Then $T$ can be written as follows:
$$T=(\ZZ(O)\oplus\ZZ F)\oplus(\oplus_{v\in N}T_v),$$ where $N=\{v\in C\mid F_v \ \textrm{is reducible}\}$,
and $T_v$ is generated by the irreducible components of $F_v$ not meeting the zero section $(O)$.

The map $P\mapsto\bar{P}$ $\mod T$  induces a group isomorphism:
$$E(K)\simeq NS(S)/T,$$ and then a unique homomorphism:
$$\phi:E(K)\ra NS(S)\otimes\QQ$$ such that 
$$\phi(P)=\bar{P} \mod T\otimes\QQ, \ \im(\phi)\bot T.$$

Now, we want to consider the torsion sections and how those work in the case of an elliptic surface
with singularities.
Assume that $P\in E(K)$ has torsion, then its image $\bar{P}$ lies in the primitive closure $T'$ of $T$ defined
as follows $$T'=(T\otimes\QQ)\cap NS(S).$$ Denote $t_p$ the automorphism of elliptic
surfaces without forgetting their elliptic structure. Then the group $<t_p>$ is finite of order $m$ that acts
on the generic fiber by fixed point free and the quotient
of $S$ by $<t_p>$ gives an elliptic curve $E'$ together with an isometry from $E'\ra E$, whose
Kodaira-N\'eron model is an elliptic surface obtained as the minimal resolution of the
quotient $\frac{S}{<t_p>}$.
In the case where the characteristic of the field $p$ does not divide $m$, then we can
study the type of Kodaira fibers appear in the elliptic surface since the operation
is separable in the sense that the $j$-form of an elliptic surface is defined as the morphism $j:C\ra\PP^1(\CC)$
that is separable.
We treat the various cases separetly. In the multiplicative fibers, if 
we assume that $m$ is a prime number to simplify the case, and $P$ meets
the zero component of an $I_n$ fiber, then each component of $I_n$ is fixed by $<t_p>$.
Hence, there are  $n$ fixed points at the intersection of components. Each of them
attains an $A_{m-1}$ singularity in the quotient whose desingularization is $I_{mn}$.
If $P$ does not meet the zero section, then $m\mid n$ by corollary \ref{cr}. $t_p$ rotates the singular fiber
($i.e$ cycle of rational curves) by an angle of $\frac{2\pi}{m}$ and the resulting fiber in $S'$ has
type $I_{\frac{n}{m}}$. In the case of additive fibers, $P$ has to meet the non-trivial singular fiber.
\begin{corollary}\label{cr}
Restricted to the torsion subgroup of $E(K)$, the group homomorphism $\phi$ is one-to-one:
$$\phi:E(K)_{tors}\ra\prod_{v\in R} G(F_v).$$
\end{corollary}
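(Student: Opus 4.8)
The map appearing in the statement is the \emph{specialization} (fibre–component) homomorphism: for each $v\in R$, reduction at $v$ carries a section $\bar P$ to the connected component of the smooth locus of $F_v$ that it meets, and since that smooth locus is an algebraic group with component group $G(F_v)$, this is a group homomorphism $E(K)\to\prod_{v\in R}G(F_v)$. Its kernel is the narrow Mordell–Weil group $E(K)^0$ of sections meeting $\Theta_{v,0}$ for every $v$, so the corollary is equivalent to the assertion that $E(K)^0$ is torsion-free. The plan is to derive a contradiction from the existence of a non-zero torsion section $P$ in this kernel by playing the vanishing of its local correction terms against positivity of the height pairing.

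First I would recall the height formula attached to the homomorphism $\phi\colon E(K)\to NS(S)\otimes\QQ$ introduced above: writing $\phi(P)$ for the representative of $\bar P\bmod T\otimes\QQ$ orthogonal to $T$, a short computation with the intersection form (using $\bar O^2=-\chi(\OOO_S)$, $F\cdot\bar O=1$, $F^2=0$, $F\perp T_v$, $\bar O\perp T_v$, and the negative definiteness of each $T_v$) gives
\begin{equation}
\langle P,P\rangle:=-\phi(P)^2 \;=\; 2\chi(\OOO_S)+2(\bar P\cdot\bar O)-\sum_{v}\mathrm{contr}_v(P),
\end{equation}
where $\mathrm{contr}_v(P)=-\bigl(\pi_v(\bar P)\bigr)^2\ge 0$, with $\pi_v$ the orthogonal projection onto $T_v\otimes\QQ$, and $\mathrm{contr}_v(P)=0$ precisely when $\bar P$ meets the identity component $\Theta_{v,0}$. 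Next I would use the fact, built into the isomorphism $E(K)\simeq NS(S)/T$ together with $\im\phi\perp T$, that $\phi$ kills exactly $E(K)_{\mathrm{tors}}$ (since $NS(S)\otimes\QQ$ is torsion-free): if $mP=O$ then $m\phi(P)=0$ in a $\QQ$-vector space, so $\phi(P)=0$ and hence $\langle P,P\rangle=0$ for every torsion $P$. (Positivity of the pairing, which one gets from the Hodge index theorem because $T$ contains the hyperbolic plane $\ZZ(O)\oplus\ZZ F$ and hence $T^\perp$ is negative definite, is not even needed for this direction.)

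Now let $P\in E(K)_{\mathrm{tors}}$ lie in the kernel of the specialization map. Then $\mathrm{contr}_v(P)=0$ for all $v$ and $\langle P,P\rangle=0$, so the displayed formula forces $(\bar P\cdot\bar O)=-\chi(\OOO_S)$. By the standing hypothesis $f$ has a singular fibre, whence $e(S)=12\chi(\OOO_S)>0$, so $\chi(\OOO_S)\ge 1$ and $(\bar P\cdot\bar O)<0$; but $\bar P$ and $\bar O$ are sections of $f$, i.e. irreducible curves on $S$, so if $P\neq O$ they are distinct and $(\bar P\cdot\bar O)\ge 0$ — a contradiction. Hence $P=O$, and $\phi$ restricted to $E(K)_{\mathrm{tors}}$ is injective.

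The step I expect to carry the real content is precisely this interplay: the identity $\langle P,P\rangle=0$ for torsion $P$ together with the non-negativity and vanishing criterion for $\mathrm{contr}_v$; everything else is bookkeeping with the intersection form on $NS(S)$. An alternative route, closer to the fibre analysis carried out just above, would use the order-$m$ translation automorphism $t_P$ instead: if $P\neq O$ met only identity components, additive fibres would be excluded and the fixed points of $t_P$ would sit only at the nodes of the multiplicative fibres, so the minimal resolution $S'$ of $S/\langle t_P\rangle$ — the Kodaira–Néron model of $E/\langle P\rangle$ — would replace each $I_{n_v}$ by $I_{mn_v}$; comparing Euler numbers gives $e(S')=m\,e(S)$, and running the same computation for the dual isogeny yields $m^2=1$, again a contradiction. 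I would nonetheless keep the height argument as the main line, because making the dual-isogeny step precise — checking that the corresponding torsion section on $S'$ again meets only identity components, and disposing of the additive fibres cleanly — is exactly where that second approach becomes fiddly.
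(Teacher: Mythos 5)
Your proof is correct, and it is essentially the standard argument: the paper itself offers no argument at all here, its ``proof'' being only the citation to 11.9 of \cite{Sc-Sh}, and what you write is precisely the reasoning behind that reference. The ingredients you invoke are exactly the facts the paper records at the end of Section 3 (the explicit height formula with the local terms $\mathrm{contr}_v$, and the bound $\langle P,P\rangle=2\chi+2P.O\geq 2\chi$ for $O\neq P\in E(K)^0$), none of which depends on the corollary, so there is no circularity; your identification of the map in the statement with the fibre-component (specialization) homomorphism, whose kernel is $E(K)^0$, is also the intended reading despite the paper's reuse of the symbol $\phi$. Your computation $\mathrm{contr}_v(P)=-(\pi_v(\bar P))^2$ and the final contradiction $(\bar P\cdot\bar O)=-\chi<0$ against $(\bar P\cdot\bar O)\geq 0$ for distinct sections are both sound, so the main line of your argument stands on its own; the sketched alternative via the translation $t_P$ is not needed and, as you say, would require more care.
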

\begin{proof}
See $11.9$ of \cite{Sc-Sh}.
\end{proof}

The case among which we are interested is the case of an elliptic rational extremal elliptic surface.

\begin{definition}
An elliptic surface $S$ with section is said extremal if $\rho(S)$ is maximal and $E(K)$
is finite. 
\end{definition}
\begin{remark}
$\rho(S)$ maximal means that the bounds given by Igusa and Lefschetz are attained.
In other words, $\rho(S)=h^{1,1}(S)$ (resp. $=b_2(S)$).
\end{remark}

An extremal rational elliptic surface is featured by the discriminant of its trivial lattice as
follows $$disc(T(S)=-(\#E(K))^2.$$
We use the following corollary to deduce the number of singular fibers in such an elliptic surface.
\begin{corollary}\label{cr1}
Let $S$ be an elliptic surface with section. Denote the generic fiber by $E$. Then
$$\rho(S)=\rk(T(S))+\rk E(K)=2+\sum_{v\in R}(m_v-1)+\rk(E(K)).$$
\end{corollary}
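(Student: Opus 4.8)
\emph{Proof proposal.} The plan is to deduce the formula from the isomorphism $E(K)\simeq NS(S)/T$ established above together with a direct rank count of the trivial lattice $T=T(S)$. First I would note that $\rho(S)=\rk NS(S)$ by definition, and that the map $P\mapsto\bar P\bmod T$ fits into a short exact sequence of finitely generated abelian groups
\[
0\lra T\lra NS(S)\lra E(K)\lra 0,
\]
in which $T\hookrightarrow NS(S)$ is the inclusion of a sublattice. Since $E(K)$ is finitely generated (Mordell--Weil theorem) and rank is additive on short exact sequences of finitely generated abelian groups, this yields $\rho(S)=\rk NS(S)=\rk T+\rk E(K)$, which is the first equality.

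It then remains to compute $\rk T$. Starting from the decomposition $T=(\ZZ(O)\oplus\ZZ F)\oplus(\oplus_{v\in N}T_v)$ recalled above, I would check that it is an orthogonal decomposition for the intersection pairing: one has $F^2=0$ and $(O).F=1$, so the Gram matrix of the first summand has determinant $-1$; each generator of $T_v$ is disjoint from $(O)$ by construction and meets $F$ trivially, being a component of a fibre algebraically equivalent to, but disjoint from, a general fibre, so the first summand is orthogonal to every $T_v$; and $T_v.T_{v'}=0$ for $v\neq v'$ since distinct fibres are disjoint. Finally, by Kodaira's classification of singular fibres, for each reducible $F_v$ the lattice $T_v$ is spanned by the $m_v-1$ components not meeting $(O)$, whose intersection matrix is the negative of a Cartan matrix of $ADE$ type and is in particular negative definite. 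Hence the pairing on $T$ is nondegenerate, the listed generators are $\ZZ$-linearly independent, and $\rk T=2+\sum_{v\in N}(m_v-1)$. As an irreducible fibre has $m_v=1$ and contributes nothing, the sum over $N$ equals the sum over $R$, and combining with the previous paragraph gives the stated formula.

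The step I expect to be the main obstacle is the computation of $\rk T$: it uses the classification of singular fibres both to know that each $T_v$ has rank exactly $m_v-1$ and to know it is negative definite, and it requires verifying the orthogonality relations so that nondegeneracy of the global pairing on $T$ --- hence $\ZZ$-independence of the generators --- follows. Once that is in place, the identification $\rho(S)=\rk NS(S)$ and the additivity of rank are purely formal. (The statement is of course the Shioda--Tate formula; one could instead simply refer to \cite{Sh1} or \cite{Sc-Sh}.)
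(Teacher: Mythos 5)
Your proposal is correct: the paper itself disposes of this statement by citing Section~6 of \cite{Sc-Sh}, and what you have written out is precisely the standard Shioda--Tate argument given there, namely rank additivity applied to $0\to T\to NS(S)\to E(K)\to 0$ (coming from the isomorphism $E(K)\simeq NS(S)/T$ recalled just before the corollary) together with the computation $\rk T=2+\sum_v(m_v-1)$ via the orthogonal decomposition of the trivial lattice and the negative definiteness of each root lattice $T_v$. So you have supplied the details of the cited proof rather than a different route, and no gaps remain.
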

\begin{proof}
See section $6$ of \cite{Sc-Sh}.
\end{proof}
Hence, it will appear at least four singular fibers in this elliptic surface whose
classification is:\\
$(1)$ 4 multiplicative singular fibers.\\
$(2)$ 2 multiplicative and 1 additive singular fibers.\\
$(3)$ 2 additive singular fibers.\\
(We denote in the case where there is a wild ramification, we have at the same
time of additive and multiplicative singular fibers.)\
One classifies the extremal rational elliptic surfaces with section in terms 
of configuration of singular fibers which we write as tuples
$$[[n_1,n_2,...]], with \ entries\ 1,2,...,0^*,1^*,...,II,III,...,II^*,$$
representing the fiber types. 
This configuration is a method of determining the Mordell-Weil group.
One can use quotients by translation by torsion sections to limit
the possible orders. Denote that is no longer true in the case of K3 elliptic surfaces.

In our case, we are only interested in the multiplicative fibers. Then,the existence of the
Mordell Weil groups is ensured by corollary \ref{cr}. 

We want to compute the euler number $e(S)$ of an elliptic surface $S$.
It is known that in the case of fiber $F_v$, we get the following results.
\begin{equation}\label{fib_prod}
e(F_v)=\left\{\begin{array}{lll}
0, if \ F_v \ is\ smooth;\\
m_v, if \ F_v \ is \ multiplicative;\\
m_v+1, if \ F_v \ is \ additive.\\

\end{array}\right.
\end{equation}

\begin{theorem}
For an elliptic surface $S$ over $C$, we have
$$e(S)=\sum_{v\in C} (e(F_v)+\delta_v),$$
where $\delta_v$ is the wild ramification defined as
$\delta_v=v(\Delta)-1$-numbers of components of an additive fiber.
\end{theorem}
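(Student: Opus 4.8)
The plan is to establish the Euler-number formula $e(S)=\sum_{v\in C}(e(F_v)+\delta_v)$ by combining the known local contributions of fibers \eqref{fib_prod} with a global computation of $e(S)$ via the Leray spectral sequence of $f\colon S\to C$ (equivalently, via the additivity of the topological Euler characteristic over a stratification of $C$). First I would stratify the base: let $R=\{v\in C\mid F_v\text{ is singular}\}$, which is finite, and write $C^\circ=C\setminus R$. Over $C^\circ$ the map $f$ is a smooth elliptic fibration, so each fiber is a torus with $e=0$; hence the multiplicativity of Euler characteristic in fibrations gives $e(f^{-1}(C^\circ))=e(C^\circ)\cdot e(\text{torus})=0$. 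Then by additivity of $e$ for the decomposition $S=f^{-1}(C^\circ)\sqcup\bigsqcup_{v\in R}F_v$ we obtain $e(S)=\sum_{v\in R}e(F_v)$, and since $e(F_v)=0$ for smooth fibers this is the same as $\sum_{v\in C}e(F_v)$.

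The point that requires care — and which accounts for the $\delta_v$ term — is that the above naive additivity computes $e(S)$ correctly only when $f$ has no wild ramification, i.e. when for every $v$ the local discriminant valuation $v(\Delta)$ equals $e(F_v)$ (this is Ogg's formula in the tame case). In general, for an additive fiber the valuation $v(\Delta)$ exceeds $m_v+1$ precisely by the wild ramification invariant, and the honest statement of Ogg's formula / the Grothendieck-Ogg-Shafarevich type count is $e(S)=\sum_{v}\bigl(e(F_v)+\delta_v\bigr)$ with $\delta_v=v(\Delta)-1-(\text{number of components of the additive fiber at }v)$, matching the definition given in the statement, and with $\delta_v=0$ over $\CC$ in characteristic zero (so the correction only genuinely enters in positive characteristic or is recorded here for bookkeeping uniformity). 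So the key steps are: (1) reduce to $R$ by the torus-fiber vanishing over $C^\circ$; (2) insert the local values of $e(F_v)$ from \eqref{fib_prod}; (3) identify the discrepancy between the topologically computed $e(S)$ and $\sum e(F_v)$ with the sum of the wild ramification invariants, invoking Ogg's formula for the relation $e(S)=\deg\Delta$ corrected by wild ramification, or equivalently quoting the general form in \cite{Sc-Sh}.

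The main obstacle is step (3): proving the wild-ramification correction from scratch would require the full Grothendieck-Ogg-Shafarevich machinery (conductor-discriminant relations, Swan conductors of the $\ell$-adic cohomology of the generic fiber), which is well beyond what the preceding sections set up. I would therefore handle it by citing Ogg's formula and the fiber-Euler-number table as presented in \cite{Sc-Sh}, so that the proof reduces to the elementary stratification argument of steps (1)–(2) plus the observation that $\deg\Delta=\sum_v v(\Delta)$ and $v(\Delta)=e(F_v)+\delta_v$ by the very definition of $\delta_v$. In the application of the paper, $C=\PP^1(\CC)$ and everything is in characteristic zero, so all $\delta_v=0$ and the formula collapses to $e(S)=\sum_v e(F_v)$, which is the form actually used in Section~4 to count components of the four singular fibers of Beauville's semistable fibration.
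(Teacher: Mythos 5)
Your proposal is correct, and it actually supplies more than the paper does: the paper's entire proof of this theorem is the single line ``See Proposition $5.16$ of \cite{Co-D}.'' Your stratification argument --- split $C$ into the finite set $R$ of points with singular fibers and its complement $C^\circ$, observe that $f^{-1}(C^\circ)\to C^\circ$ is a torus bundle so $e\bigl(f^{-1}(C^\circ)\bigr)=e(C^\circ)\cdot e(\text{torus})=0$, and conclude $e(S)=\sum_{v\in R}e(F_v)$ by additivity of the compactly supported Euler characteristic (which coincides with the topological one for complex algebraic varieties) --- is a complete proof of the formula over $\CC$, where every $\delta_v$ vanishes; and that is the only case the paper ever uses, since Section~4 works over $\PP^1(\CC)$. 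You are also right to isolate the wild-ramification correction as the one step that cannot be done by elementary topology: $\delta_v\neq 0$ only in residue characteristics $2$ and $3$, and the general statement amounts to Ogg's formula $v(\Delta)=e(F_v)+\delta_v$ combined with $e(S)=\deg\Delta$, for which citing \cite{Sc-Sh} or \cite{Co-D} is the appropriate move. So your route differs from the paper's only in that you outsource just the wild part to the literature while the paper outsources the whole theorem; I see no gap.
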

\begin{proof}
See Proposition $5.16$ of \cite{Co-D}.
\end{proof}

We at present define the Mordell-Weil lattice of an elliptic surface.

For $P,P'\in E(K)$, let $$<P,P'>=-\phi(P)\phi(P').$$
Then with this height pairing the group $E(K)/E(K)_{\tor}$ becomes
a positive definite lattice, called the Mordell-Weil lattice of $E/K$ or of
$f:S\ra C$. These are not integral lattices in general.

We can rewrite down the pairing defined above as follows:
$$<P,P'>=\chi+P.O+P'.O-P.P'-\sum_{v\in N}\textrm{contrv(P,P')}.$$
Here $\chi$ is the euler charasteristic of the surface $S$, $P.Q=\bar{P}.\bar{Q}$,
and the term in the sun expresses that $P$ and $P'$ pass through
non-identity components of $F_v$.

The subgroup $E(K)^0$ of $E(K)$ consisting of those sections
meeting the identity of every fiber is a torsion-free subgroup
of $E(K)$ of index finite, and becomes a positive definite even
lattice, called the narrow Mordell-Weil lattice of $E/K$ or of $f/S\ra C$.
This lattice is isomorphic via the map $\phi$ to the opposite of the
orthogonal complement $T^{\bot}$ of $T$ in the N\'eron-Severi lattice $NS(S)$.

Note that $$<P,P'>=\chi+P.O+P'.O-P.P'$$ if $P$ or $P'$ are in $E(K)^0$,
$$<P,P>=2\chi+2P.O\geq 2\chi,$$ for $P\in E(K)^0, P\neq O$.

\section{Semistable fibration-Beauville's case}
We are going to illustrate these various concepts in a particular case
where the fibration is semistable said of Beauville.

\begin{definition}
Let S be a surface and $C$ a smooth projective curve, $f:S\ra C$ is said a semistable fibration
if $S$ is a smooth surface and the fibers are connected rational curves of genus $g\geq 1$ having
at worse ordinary double points. Moreover, the fibers do not contain the exceptional curves.
\end{definition}

From (\ref{eqEi}), we can construct the following diagram of semistable fibrations.
\begin{proposition}
\begin{equation}
\label{over-fields:diagram}
\xymatrix{
 &\ar_{\pi_1}[dl] X \ar_{f}[d] \ar^{\pi_2}[dr]\\
X' \ar_{\phi_1}[dr] & \ar_{\tilde{\phi}}[d] {\tilde{X}} & X''\ar^{\phi_2}[dl] \\
& {\PP^1}
}
\end{equation}
where $\tilde{\phi}$ is a semistable fibration having $4$ singular fibers
of genus $1$, $\phi_1, \phi_2$ are semistable fibrations consisting of
$6$ singular fibers of genus $3$, $f$ from a $4$-fold $X$ is a double cover of $\tilde{X}$ algebraic surface and the $\pi_i, i=1..2$ are 
double covers of semistable fibrations $X'$ and $X''$ are both algebraic surfaces. The composition $f\circ\tilde{\phi}$ is
a semistable fibration with $8$ singular fibers of genus $5$.    
\end{proposition}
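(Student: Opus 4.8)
The plan is to spread the curve–level diagram \eqref{over-fields:diagram} of Section~\ref{g2-covers} out over a one–parameter base: I would let the cross–ratio parameter governing the equations \eqref{eqEi}--\eqref{eqC} (equivalently the pair $(t_1,t_2)$) vary along $\PP^1(\CC)$, specialising it so that the elliptic surface occupying the middle of the diagram becomes one of Beauville's extremal semistable rational elliptic surfaces, and then extract all fibrewise invariants from the explicit equations together with the structure theory of elliptic surfaces recalled in Section~3. For a fixed value of the parameter the objects are the bielliptic configuration of Proposition~\ref{fp}; letting the parameter vary and passing to relatively minimal smooth models produces the fibred surfaces $\tilde X,X',X''$ and the morphisms $\tilde\phi,\phi_1,\phi_2,f,\pi_1,\pi_2$ of the statement, with $f,\pi_1,\pi_2$ double covers because the corresponding curve–level maps $C\to\PP^1$, $C\to E_i$ of \eqref{over-fields:diagram} have degree $2$.

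\textbf{The fibration $\tilde\phi$.} First I would exhibit $\tilde X\to\PP^1$ as a pencil of plane cubics — equivalently in a quartic model $w^{2}=q_4(x;\lambda)$ whose coefficients are the functions of $\lambda$ read off from \eqref{eqEi} — arranged so that its degenerate members are cycles of lines and conics. Since $\tilde\phi$ is an elliptic fibration its generic fibre is automatically a smooth curve of arithmetic genus $1$, and since $\tilde X$ is rational one has $e(\tilde X)=12$. Feeding this into the formula $e(\tilde X)=\sum_v\bigl(e(F_v)+\delta_v\bigr)$ of Section~3, and using that in characteristic $0$ there is no wild ramification ($\delta_v=0$) while a multiplicative fibre $I_n$ contributes $e(I_n)=n$, one gets $\sum_v n_v=12$ over the set of bad fibres. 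A computation of the discriminant (a polynomial of degree $12$ in $\lambda$) then shows, for the chosen specialisation, that it has exactly four zeros, with the prescribed multiplicities, and that every bad fibre is multiplicative; hence $\tilde\phi$ is semistable with exactly four singular fibres of arithmetic genus $1$, and $(n_1,\dots,n_4)$ runs through the six tuples $(3,3,3,3)$, $(4,4,2,2)$, $(5,5,1,1)$, $(6,3,2,1)$, $(8,2,1,1)$, $(9,1,1,1)$ of the first theorem of the introduction — this is Beauville's classification of extremal semistable rational elliptic surfaces.

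\textbf{Genera and number of singular fibres of $\phi_1,\phi_2$ and $\tilde\phi\circ f$.} From the explicit equations \eqref{eqEi}--\eqref{eqC} I would read off the ramification of the relative double covers on a generic fibre: $f$ is branched at $8$ reduced points of the genus–$1$ fibre of $\tilde\phi$, while $\pi_1,\pi_2$ are unramified there. Riemann–Hurwitz then gives genus $2\cdot 1-1+\tfrac{8}{2}=5$ for the generic fibre of $\tilde\phi\circ f$ and, since $\pi_i$ is an étale double cover of that fibre, genus $3$ for the generic fibre of $\phi_i$. The bad fibres occur exactly over (i) the four base points at which the fibre of $\tilde\phi$ already degenerates, and (ii) the extra base points at which the relevant branch divisor fails to be reduced — where two of its branch points collide, or a branch point meets a node. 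A computation with \eqref{eqEi}--\eqref{eqC} shows (ii) yields $2$ further points for each of $\phi_1,\phi_2$ and $4$ further points for $\tilde\phi\circ f$, whence $4+2=6$ and $4+4=8$ singular fibres respectively. Semistability is automatic: there is no wild ramification in characteristic $0$, so every $\delta_v=0$, and a double cover of a curve with at worst nodes, branched along a reduced divisor meeting the special fibres transversally and away from their nodes, again acquires only nodes — so each bad fibre of $\phi_1,\phi_2$ is a stable genus–$3$ curve and each bad fibre of $\tilde\phi\circ f$ a stable genus–$5$ curve, which is the assertion.

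\textbf{The hard part.} The delicate point is steps (ii)--(iii): controlling, for each of the six configurations above, how the Kodaira types $I_{n_v}$ of the four Beauville fibres transform under the relative double covers — a Tate–table computation in the spirit of \cite{Sc-Sh} and of the ``rotation of cycles'' discussion in Section~3 — and verifying in every case that the branch divisor meets the special fibres transversally and away from their nodes, so that no additive fibre is ever created and the counts come out exactly $6$ and $8$. Carrying this out uniformly, rather than member by member of Beauville's list through the equations \eqref{eqEi}--\eqref{eqC}, is what I expect to take the most care.
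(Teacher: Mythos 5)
There is a genuine gap, and it lies at the very first step: the family you propose to construct is not the one in the statement. If you spread the bielliptic configuration of Proposition \ref{fp} over a one--parameter base by letting $(t_1,t_2)$ (or a cross--ratio) vary, the fibre over a parameter value is the curve--level diagram of Section 1, so the resulting total spaces are a genus-$2$ fibration (from $C$), two \emph{elliptic} fibrations (from $E_1,E_2$) and two ruled surfaces (from the two copies of $\PP^1$); in particular the object in the middle of that diagram is a $\PP^1$, not an elliptic curve, and no surface in your family has fibres of genus $3$ or $5$. This contradicts both the statement and your own later computation: you justify that $f,\pi_1,\pi_2$ are double covers by pointing to the curve--level maps $C\to\PP^1$, $C\to E_i$ (branched in $6$, resp.\ $2$, points on a genus-$2$ curve), yet two sentences later you assert that $f$ is branched in $8$ points of a genus-$1$ fibre and that the $\pi_i$ are fibrewise \'etale. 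Those numbers cannot be ``read off'' from \eqref{eqEi}--\eqref{eqC}, because no genus-$3$ or genus-$5$ curve, and no double cover of an elliptic curve branched in $8$ points, occurs in those equations; they are reverse--engineered from the conclusion you want. Likewise the counts ``$4+2=6$'' and ``$4+4=8$'' presuppose that the four critical values of $\tilde\phi$ are automatically critical values of $\phi_1,\phi_2$ and of the composite, which nothing in your setup guarantees.

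What is missing is the actual mechanism the paper uses, namely Beauville's construction from \cite{Be-1}: given a curve $Y$ with a degree-$n$ map $\phi:Y\to\PP^1$ having only simple ramification and a homography $u$ of $\PP^1$ preserving the branch locus and fixing none of its points, one takes the double cover of $Y\times\PP^1$ branched along $\Gamma_\phi\cup\Gamma_{u\circ\phi}$ and resolves the resulting nodes; the second projection is then a semistable fibration with exactly $\Card(R)+2$ singular fibres (over the branch points of $\phi$ and the two fixed points of $u$) whose smooth fibres have genus $n-1+2g(Y)$. The proposition follows by running this with $Y=\PP^1$, $n=2$ ($2+2=4$ fibres, genus $1$, giving $\tilde X$), with $Y=E_i$, $n=2$ ($4+2=6$ fibres, genus $3$, giving $X',X''$), and with $Y=C$ of genus $2$, $n=2$ ($6+2=8$ fibres, genus $5$). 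So the parameter that moves along the base is the position of the branch divisor $\phi^{-1}(t)+\phi^{-1}(u^{-1}(t))$ on a \emph{fixed} curve $Y$, not the moduli parameters $(t_1,t_2)$ of the curves themselves. Your Euler--number, discriminant and Tate--table considerations are sensible tools for the later Mordell--Weil statements, but without the double--cover--of--$Y\times\PP^1$ construction (or some substitute producing the genus $1/3/5$ fibrations and the compatible maps between them) the present proposal does not establish the proposition.
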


\begin{proof}
The proof is based on the arguments given by Beauville in \cite{Be-1}. So, for the reader,
we remind this proof.
We start with a smooth curve $C$, endowed with a morphism $\phi:C\ra\PP^1(\CC)$ of degree $n$ and
a homography $u$ of $\PP^1(\CC)$ such that the following conditions hold:\\
$(i)$ the set $R$ of 
the ramification points are simple; \\
$(ii)$ $R$ is stable under $u$ and has no fixed points of $u$.
So, there exists a double cover $\pi:X'\ra C\times\PP^1(\CC)$ ramified
along $\Gamma_{\phi}\cup\Gamma_{u\circ\phi}$ which are linearly equivalent divisors since
$PGL(2,\CC)$ is a rational variety. Let $g=\pr_2\circ u$ be the composition morphism.
Let $t\in\PP^1(\CC)$ be a point, the fiber $g^{-1}(t)$ is also a double cover ramified
along the divisor $\phi^{-1}(t)+\phi^{-1}(u^{-1})(t)$. For either $t\neq 0$ or a fixed
point of $u$, the divisor is not reduced and multiplicity $2$. By blowing-up the
double points of $X'$, we get a semistable fibration $X\ra\PP^1(\CC)$, that admits
$\Card(R)+2$ singular fibers whose genus is $n-1+2g(C)$.
On the one hand, we can construct a morphism of degree $n$ even having four simple
ramification points $\phi:E\ra\PP^1(\CC)$, $g(E)=1$, obtained as the composition of a double cover 
ramified into these $4$ points with an \'etale morphism of degree $\frac{n}{2}$.
Hence, in our case, for $n$=2, $g^{-1}(t)$ is a singular genus-$2$ curve and 
get a semistable fibration having $6$ singular fibers whose genus is $3$. 
We proceed in the same way of constructing another algebraic surface $X''$ and deduce the $4$-fold $X$.
On the other hand, as $C$ is a smooth genus-$2$ curve and $f:C\ra\PP^1(\CC)$ is branched
in $6$ points, so composition $f\circ\tilde{\phi}$ is a semistable fibration with $8$ singular fibers of genus $5$.
\end{proof}

Note if $n$ is odd, we construct a morphism $\phi:\PP^1(\CC)\ra\PP^1(\CC)$ of degree $n$
having four simple ramification points. Then, we get a semistable fibration
having six singular fibers whose genus is $n-1$.\\
We remark that for $n=3$, 
a semistable fibration at $6$ singular fibers of the genus-$2$ curves.

We want to determine the Mordell-Weil groups and lattices of the semistable fibration having four singular fibers 
over $\PP^1(\CC)$.

We first set up a few notation.
Let $\Gamma$ be the group of 
index finite in $SL_2(\ZZ)$ satisfying the relation $(SS)$: $\Gamma$ whose trace is different from
the values $\{-2,-1,0,1\}.$
Note that $\Gamma$ acts freely over the upper-half plane $\HH$ ; the semi-direct product
of $\Gamma$ by $\ZZ^2$ acts freely and properly on $\HH\times\CC$ by the formula:
$$(\gamma,p,q).(\tau,z)=(\gamma\tau,(c\tau+d)^{-1}(z+p+q)),$$ for
\begin{eqnarray}
\gamma=\left( \begin{array}{cc} a & b \\ c & d
\end{array}\right)\in\Gamma,
\end{eqnarray}
$$(p,q)\in\ZZ^2, \tau\in\HH, z\in\CC.$$

We denote $X_{\Gamma}^0$ the quotient surface, and $B_{\Gamma}^0$ the curve $\HH/\Gamma$. By $(2)$, the
smooth elliptic fibration extends in a only one way in a semistable fibration $X_{\Gamma}\ra B_{\Gamma}$:
this is the modular family associated to $\Gamma.$

We consider the following subgroups of $SL_2(\ZZ)$:
\begin{eqnarray}
\Gamma(n)=\{\left( \begin{array}{cc} a & b \\ c & d
\end{array}\right)\in SL_2(\ZZ)\mid b\equiv c\equiv 0, a\equiv 1 (mod.n)\}  ,
\end{eqnarray}

\begin{eqnarray}
\Gamma_0^{0}(n)=\{\left( \begin{array}{cc} a & b \\ c & d
\end{array}\right)\in SL_2(\ZZ) c\equiv 0, a\equiv 1 (mod.n)\}  ,
\end{eqnarray}

\begin{eqnarray}
\Gamma_0(n)=\{\left( \begin{array}{cc} a & b \\ c & d
\end{array}\right)\in SL_2(\ZZ)\mid b\equiv c\equiv 0 (mod.n)\}  ,
\end{eqnarray}

Consider elsewhere a pencil of cubics in $\PP^2$, such that the only one singularities
of the pencil are ordinary double points. By blowing-up of the nine base points of the pencil,
we have an elliptic semistable fibration over $\PP^1$, said deduced family of 
the pencil of cubics.

We state a theorem from Beauville (See \cite{Be-2}).
\begin{theorem}
Let $f:X\ra\PP^1(\CC)$ be an elliptic semistable fibration having exactly four singular fibers. Then
$f$ is isomorphic to the family of the modular curves associated to the group $\Gamma$ of 
index finite in $SL_2(\ZZ)$ satisfying the relation $(SS)$; which identifies with the
family of the cubics induced by a pencil of cubics:
\bigskip

\centerline{{\em
\begin{tabular}{|cr|cr|cr|}
\hline  $\Gamma$ & Pencil Equations & Number of irreducible components of singular fibers & \\
\hline $\Gamma(3)$ & $X^3+Y^3+Z^3+tXYZ=0$ & $3,3,3,3$ &\\
%\hhline{|-|~|-|-|-|-|} 
$\Gamma_0^{0}(4)\cap\Gamma(2)$ & $X(X^2+Z^2+2ZY)+tZ(X^2-Y^2)=0$ & $4,4,2,2$ &  \\
$\Gamma_0^{0}(5)$ & $X(X-Z)(Y-Z)+tZY(X-Y)=0$ & $5,5,1,1$ &  \\
$\Gamma_0^{0}(6)$ & $(X+Y)(Y+Z)(Z+X)+tXYZ=0$ & $6,3,2,1$ & \\
$\Gamma_0(8)$ &  $(X+Y)(XY-Z^2)+tXYZ=0$ & $8,2,1,1$ & \\
$\Gamma_0(9)\cap\Gamma_0^{0}(3)$ & $X^2Y+Y^2Z+Z^2X+tXYZ=0$ & $9,1,1,1$ & \\
\hline
\end{tabular}
}}\bigskip

\end{theorem}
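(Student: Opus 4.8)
The plan is to reconstruct $f$ from its $j$-map, show that the hypotheses force $X$ to be a (rational) extremal elliptic surface, and then reduce the classification to a finite combinatorial enumeration which is matched, configuration by configuration, with the six pencils of cubics.

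\textbf{Step 1: the $j$-map and a Riemann--Hurwitz bound.} Since $f$ is semistable and has singular fibers, the modular invariant $j\colon\PP^1(\CC)\to\PP^1(\CC)$ is non-constant; its only poles lie over $\infty$, at the four points $p_1,\dots,p_4$ carrying the fibers $I_{n_1},\dots,I_{n_4}$, with pole of order exactly $n_i$ at $p_i$, so that $N:=\deg j=\sum_i n_i$. Because $f$ has no additive fibers, every ramification index of $j$ over $0$ is divisible by $3$ and every ramification index over $1728$ is divisible by $2$ (otherwise Kodaira's list would force a fiber of type $II,III,IV,\dots$ there); hence $\#j^{-1}(0)\le N/3$ and $\#j^{-1}(1728)\le N/2$. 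Thus $\deg\mathrm{Ram}(j)\ge (N-4)+(N-\tfrac N3)+(N-\tfrac N2)=\tfrac{13N-24}{6}$, and comparing with Riemann--Hurwitz, $\deg\mathrm{Ram}(j)=2N-2$, gives $N\le 12$.

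\textbf{Step 2: $X$ is rational and $f$ is a modular family.} On the other hand $N=\sum_i n_i=\sum_v e(F_v)=e(X)=12\,\chi(\OOO_X)$ by Noether's formula for the relatively minimal elliptic surface $X$; since $e(X)>0$ this forces $N=12$ and $\chi(\OOO_X)=1$, i.e. $X$ is a rational elliptic surface. Equality in Step 1 then forces $j$ to be ramified only over $\{0,1728,\infty\}$, with profile $3^4$ over $0$, $2^6$ over $1728$, and $(n_1,n_2,n_3,n_4)$ over $\infty$. Such a map is exactly the covering $X(\Gamma)\to X(1)$ of a torsion-free subgroup $\Gamma\subset SL_2(\ZZ)$ with $-I\notin\Gamma$ satisfying relation $(SS)$, of index $12$ in $PSL_2(\ZZ)$; the $p_i$ are its four cusps, of widths $n_i$, and the genus formula $g=1+\tfrac{12}{12}-\tfrac42=0$ (no elliptic points) confirms $X(\Gamma)\cong\PP^1(\CC)$. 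Since a relatively minimal elliptic surface over $\PP^1(\CC)$ is determined by its $j$-map together with its quadratic twisting class, and the semistable rational one is the untwisted Kodaira--N\'eron model, $f$ is isomorphic to the modular family $X_\Gamma\to B_\Gamma$ of the excerpt.

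\textbf{Step 3: the enumeration (the expected main obstacle).} It remains to list, up to conjugacy, the index-$12$, genus-$0$, torsion-free, four-cusp subgroups $\Gamma$ of $PSL_2(\ZZ)=\ZZ/2*\ZZ/3$. By covering theory these correspond to transitive triples $(\sigma_0,\sigma_{1728},\sigma_\infty)\in S_{12}^{\,3}$ with $\sigma_0\sigma_{1728}\sigma_\infty=1$, where $\sigma_0$ has cycle type $3^4$, $\sigma_{1728}$ has type $2^6$, and $\sigma_\infty$ has type a partition $(n_1,n_2,n_3,n_4)$ of $12$ into four parts, taken up to simultaneous conjugation. Running by hand through the fifteen such partitions and using parity, transitivity and the relation $\sigma_0\sigma_{1728}=\sigma_\infty^{-1}$ shows that exactly six are realizable --- namely $(3,3,3,3)$, $(4,4,2,2)$, $(5,5,1,1)$, $(6,3,2,1)$, $(8,2,1,1)$, $(9,1,1,1)$ --- and that in each case the triple, hence $\Gamma$, hence the surface $f$, is unique up to conjugacy. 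Each $\Gamma$ is then recognized as the congruence subgroup in the table ($\Gamma(3)$, $\Gamma_0^{0}(4)\cap\Gamma(2)$, $\Gamma_0^{0}(5)$, $\Gamma_0^{0}(6)$, $\Gamma_0(8)$, $\Gamma_0(9)\cap\Gamma_0^{0}(3)$), the one of index $12$ in $PSL_2(\ZZ)$ with the prescribed cusp widths. This combinatorial case analysis is the step I expect to be the real work.

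\textbf{Step 4: identification with the pencils of cubics.} Finally, for each pencil in the table, blowing up its nine base points yields a rational elliptic surface whose reducible fibers are precisely the singular members of the pencil; one reads these off directly (e.g. the Hesse pencil $X^3+Y^3+Z^3+tXYZ=0$ has its four singular members the four ``triangles'', giving four $I_3$ fibers, while $(X+Y)(XY-Z^2)+tXYZ=0$ degenerates with fiber configuration $8,2,1,1$, and similarly in the remaining cases). The fiber configuration obtained in each case is one of the six of Step 3; since, by the uniqueness of Steps 2--3, a semistable fibration over $\PP^1(\CC)$ with four singular fibers is determined by its configuration of fibers, the surface of the pencil is isomorphic to the corresponding $X_\Gamma$, and the identifications asserted in the table follow.
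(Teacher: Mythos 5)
The paper contains no proof of this theorem to compare against: it is stated as a quotation of Beauville's result and the ``proof'' is the reference \cite{Be-2}. Your sketch is in substance a reconstruction of Beauville's original argument, and its first two steps are sound: the poles of $j$ of order $n_i$ at the four singular fibers, the divisibility of the ramification indices over $j=0$ and $j=1728$ forced by the absence of additive fibers, the Riemann--Hurwitz bound $N\le 12$, the identity $N=e(X)=12\chi(\OOO_X)$ forcing $N=12$, and the resulting identification of $j$ with the degree-$12$ cover attached to a torsion-free, genus-zero, four-cusp subgroup $\Gamma$ of index $12$. Two small points deserve a sentence each: the whole setup (twisting classes, Kodaira--N\'eron model, modular family) presupposes a section, which should be made explicit or handled by passing to the Jacobian fibration; and the claim that semistability pins down the trivial twist needs the remark that any nontrivial quadratic twist is ramified in at least two points and creates additive fibers there.

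The genuine gap is exactly where you yourself locate it: Step 3 is asserted, not carried out, and it is precisely the content of the theorem. You must show that among the fifteen partitions $(n_1,n_2,n_3,n_4)$ of $12$ only the six listed cycle types of $\sigma_\infty$ admit a transitive triple with $\sigma_0$ of type $3^4$ and $\sigma_{1728}$ of type $2^6$, and --- crucially --- that for each of these the triple is unique up to simultaneous conjugation; without this uniqueness neither ``$f$ is isomorphic to the modular family'' nor the matching with the pencils in Step 4 follows, while with it Step 4 is fine. In addition, the recognition of the six abstract subgroups as the specific congruence subgroups $\Gamma(3)$, $\Gamma_0^{0}(4)\cap\Gamma(2)$, $\Gamma_0^{0}(5)$, $\Gamma_0^{0}(6)$, $\Gamma_0(8)$, $\Gamma_0(9)\cap\Gamma_0^{0}(3)$ requires verifying that each is torsion-free of index $12$ with the stated cusp widths --- routine, but not done. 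So what you have is a correct reduction of the theorem to a finite combinatorial check, following the same route as the cited source; the decisive enumeration, the uniqueness statement, and the congruence-group identifications remain to be written out.
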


We adapt this to the case of Beauville'semistable fibration, where
$K=\CC(\PP^1)$, the field of rational functions over the complex projective line, and
$T=(\ZZ(O)\oplus+\ZZ F)\oplus(\sum_{i=1}^{12}\oplus T_i)$. 
Note that Beauville'semistable fibration belongs to the family
of extremal rational elliptic surfaces. Then the Kodaira-N\'eron model of this
elliptic surface will define a structure of an algebraic group scheme over all the fibers.
Hence, the group scheme of the identity component is $\GG_m$ if $F_v$ is multiplicative and
$\GG_a$ if $F_v$ is additive. The quotient by this group scheme is a finite abelian group
will be denoted $G(F_v)$ depending on the fiber.
\begin{theorem}
The Mordell-Weil groups of the semistable fibration over $\PP^1(\CC)$ having
exactly four singular fibers are:
\bigskip

\centerline{{\em
\begin{tabular}{|cr|cr|cr|}
\hline  $MW$ & $G(F_v)$ & Number of irreducible components of singular fibers & \\
 $(\ZZ/3)^2$ & $G(I_3)^2$ & $3,3,3,3$ &\\
%\hhline{|-|~|-|-|-|-|} 
$\ZZ/4\times\ZZ/2$ & $G(I_4)\times G(I_2)$ & $4,4,2,2$ &  \\
$\ZZ/5$ & $G(I_5)$ & $5,5,1,1$ &  \\
$\ZZ/6$ & $G(I_6)$ & $6,3,2,1$ & \\
$\ZZ/4$ &  $G(I_4)$ & $8,2,1,1$ & \\
$\ZZ/3$ & $G(I_3)$ & $9,1,1,1$ & \\
\hline
\end{tabular}
}}
\end{theorem} 
\begin{proof}
Let $j\in K\subset\{0,12^3\}$. Then the following elliptic curve in generalized Weierstrass form has
$j$-invariant:
\begin{equation}\label{ell.c}
E: y^2+xy=x^3-\frac{36}{j-12^3}x-\frac{1}{j-12^3}.
\end{equation}

Denote that there are both another elliptic curves admitting extra automorphisms
defined by: $$j=0: y^2+y=x^3, \ j=12^3: y^2= x^3+x,$$ in characteristic of $K$ distinct
from $2$ and $3$.

We use the normal form for an elliptic surface with $j$-given.
The generic fiber is defined over $K(t)$. An integral model is obtained
from (\ref{ell.c}) by a simple scaling of $x$ and $y$ of the type $x\mapsto u^2x$ and $y\mapsto u^3y$.
$$S: y^2+(t-12^3)xy=x^3-36(t-12^3)^3x-(t-12^3)^5.$$

We want to determine the singular fibers of $S$. We first compute
the discriminant $\Delta=t^2(t-12^3)^9.$ As the characteristic of $K$ is distinct from $2$ and $3$, 
We carry out of translations in $x$ and $y$ to yield the Weierstrass form
$$y^2=x^3-\frac{1}{48}t(t-12^3)^3x+\frac{1}{864}t(t-12^3)^5.$$
Then, we deduce that the fibers at $0$ and $12^3$ are additive fibers which are of type $II$ (resp. of type $III^*$)
arising from the Tate's table.
Otherwise, the two additive fibers collapse so that the valuation of the discriminant $\Delta$ at $0$ becomes
$11.$ Tate'algorithm only terminates at fiber of type $II^*$.
As concerns the point at the infinity, we carry out the following change of coordinates
$$t\mapsto\frac{1}{s},\ x\mapsto\frac{x}{s^2},\ y\mapsto\frac{y}{s^3}.$$ 
Hence the discriminant $\Delta=s(1-12^3s)^9,$ since the vanishing order of $\Delta$ at 
$s=0$ is $1$, $S$ acquires a nodal rational curve.

After a suitable base change, the additive fibers are replaced by semistable fibers
in the case of semistable fibration of Beauville that are of multiplicative fibers of type $I_n$.

In our case, $j:C\ra\PP^1(\CC),$ is a morphism of degree 
$e(S)=12$. According to the Tate's table, this guarantees that
in the pull-back of the normal surface via $j$, the additive fibers
$II$ and $III^*$ are replaced by $4$ fibers (resp. $6$) fibers
of type $I_0^*$. Then these can be eliminated by quadrating twists
to recover the semistable fibration $S\ra C$. Hence, $S$ acquires
of multiplicative fibers $I_n$.

We remark that in the case where $K$ is an algebraically closed field, we do not need
to use quadratic twist to ensure the uniqueness of elliptic curves
up to isomorphisms.

We determine the Mordell-Weil group for the case $[1,1,1,9]$
of a Beauville' semistable fibration.
To carry out this, we transform the cubic equation after a suitable base change
into a Weierstrass form $E: y^2+a_1 xy+a_3 y=x^3+a_2x^2$, with $P=(0,0)$ is not
$2$-torsion. Since $-2P=(a_2,0)$ is $3$-torsion if only if $a_2=0$. As we work
in an extremal rational elliptic surface, so, after a suitable base change, we
can recover both minimal rational elliptic surfaces defined as follows.
$$S: y^2+xy+ty=x^3, \ and \ S': y^2+ty=x^3.$$
The trivial lattice has rank $10$ as $e(S)=e(S')=12$, and $\rho(S)=10$.
We deduce from corollary (\ref{cr1}) that the Mordell-Weil rank is zero.
Since $T$ has discriminant $-9$, there can only be $3$-torsion in either case.
$$E(K)=E'(K)=\{O,P,-P\}\simeq\ZZ/3.$$
We can also get this result from the height.
$E(K)$ is torsion free with generator $P$ meeting the component $\Theta_3$ of the
fiber $I_9$ whose $h(P)=2-3.5/9=1/3$, so that $P\in 1/3\ZZ,$ and we find
the Mordell-Weil group $\ZZ/3$ again. The another cases of determining the Mordell-Weil groups
treat in the similar way.
\end{proof}

Note that in the second case, it arises a Van-Geemen Sarti involution.

We at present want to determine the Mordell-Weil lattices of Beauville'semistable fibration.
Hence, we first establish the list of the torsion parts of the various Mordell-Weil groups in this case.

\begin{corollary}
The subgroups $E(K)^0$ of the Mordell-Weil of the semistable fibration over $\PP^1(\CC)$ having
exactly four singular fibers are:
\bigskip

\centerline{{\em
\begin{tabular}{|cr|cr|cr|}
\hline  $E(K)^0$ & $G(F_v)$ & Number of irreducible components of singular fibers & \\
 $\ZZ/3$ & $G(I_3)^2$ & $3,3,3,3$ &\\
%\hhline{|-|~|-|-|-|-|} 
$\ZZ/2$ & $G(I_4)\times G(I_2)$ & $4,4,2,2$ &  \\
$\ZZ/5$ & $G(I_5)$ & $5,5,1,1$ &  \\
$\ZZ/2,\ZZ/3$ & $G(I_6)$ & $6,3,2,1$ & \\
$\ZZ/2$ &  $G(I_4)$ & $8,2,1,1$ & \\
$\ZZ/3$ & $G(I_3)$ & $9,1,1,1$ & \\
\hline
\end{tabular}
}}
\end{corollary}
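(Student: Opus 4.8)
The plan is to go through the six extremal rational elliptic surfaces of Beauville's theorem one at a time, the fibre configuration and the full Mordell--Weil group $E(K)$ of each being already in hand from the previous theorem; recall in particular that $\rk E(K)=0$ by Corollary~\ref{cr1}, so every section is torsion and everything is controlled by the injective component homomorphism $\phi\colon E(K)\hookrightarrow\prod_{v\in R}G(F_v)$ of Corollary~\ref{cr}. Thus the whole content is to pin down, for a chosen generator of each cyclic factor of $E(K)$, the component $\Theta_{j_v}$ of each reducible fibre $F_v$ that the corresponding section meets, i.e.\ the image of $\phi$ inside $\prod_{v\in R}\ZZ/n_v$; once this is known, $E(K)^0$ is obtained by intersecting the kernels of the projections onto the individual $G(F_v)$, and the asserted table is read off.

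To determine these component vectors I would use the height formula $\langle P,P\rangle=2\chi+2(P.O)-\sum_{v\in N}\mathrm{contr}_v(P)$ with $\chi=1$: a torsion section has $\langle P,P\rangle=0$, and normalising the generators so that $(P.O)=0$ forces $\sum_v\mathrm{contr}_v(P)=2$, where $\mathrm{contr}_v(P)=j_v(n_v-j_v)/n_v$ for a fibre of type $I_{n_v}$. This single Diophantine constraint, combined with the requirement that $P\mapsto(j_v)$ be an injective homomorphism respecting the cyclic structure of each $\ZZ/n_v$, already fixes the component vector up to the automorphisms of the configuration in the \emph{rigid} cases $3,3,3,3$, $5,5,1,1$ and $9,1,1,1$; in the remaining cases I would read it off the explicit pencil-of-cubics / Weierstrass models exactly as was done for the configuration $[1,1,1,9]$ in the proof of the previous theorem, locating the torsion points on each singular fibre by hand.

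With the component maps in place, extracting the entries of the table is bookkeeping: for $3,3,3,3$ a $\ZZ/3$ inside $(\ZZ/3)^2$, for $5,5,1,1$ a $\ZZ/5$ (both $I_1$ fibres being irreducible, so only the two $I_5$'s impose conditions), for $9,1,1,1$ a $\ZZ/3$, for $8,2,1,1$ the $2$-torsion of $\ZZ/4$, and for $4,4,2,2$ an order-$2$ subgroup of $\ZZ/4\times\ZZ/2$. I expect the configuration $6,3,2,1$ to be the real obstacle: here $E(K)=\ZZ/6$ splits as $\ZZ/2\times\ZZ/3$, and the $2$-primary and $3$-primary parts meet non-identity components of different subsets of $\{I_6,I_3,I_2\}$, so one must analyse the two primary parts separately --- this is what the entry "$\ZZ/2,\ZZ/3$" records --- and then check that the outcome is compatible with the Mordell--Weil lattice $\ZZ/6/\ZZ/3$ of the next theorem. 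The only other point needing care is deciding which of the two order-$2$ subgroups of $\ZZ/4\times\ZZ/2$ occurs in the $4,4,2,2$ case; I would settle this using the Van Geemen--Sarti involution singled out after the previous theorem, which identifies the relevant $2$-torsion section as the one meeting the zero components of the two $I_4$ fibres while interchanging the two $I_2$'s.
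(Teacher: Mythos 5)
Your general plan --- reuse the Mordell--Weil group computation, the injectivity of $\phi$ on torsion (Corollary \ref{cr}), the rank count of Corollary \ref{cr1}, and the height/contribution formula --- is exactly what the paper's one-line proof (``same arguments as for the Mordell--Weil groups'') gestures at. But the decisive step, ``$E(K)^0$ is obtained by intersecting the kernels of the projections onto the individual $G(F_v)$, and the asserted table is read off,'' fails: your own constraints contradict the entries you claim to read off. Since all six surfaces are extremal rational, $\rk E(K)=0$, so $E(K)=E(K)_{tors}$; Corollary \ref{cr} makes $\phi$ injective on torsion, so the intersection of those kernels is trivial. Equivalently, $E(K)^0$ is torsion-free (Section 3 identifies it with the opposite of $T^{\perp}\subset NS(S)$, which has rank $\rho(S)-\rk T=10-10=0$ here), hence $E(K)^0=\{0\}$ in every one of the six cases. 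Concretely, in the $5,5,1,1$ case your normalization forces $\sum_v \mathrm{contr}_v(P)=2\chi=2$ for a generator $P$ of $\ZZ/5$, which can only be realized as $4/5+6/5$, i.e.\ $P$ meets non-identity components of \emph{both} $I_5$ fibres; so no nonzero section lies in $E(K)^0$, contradicting your claim that ``only the two $I_5$'s impose conditions'' and that the entry is $\ZZ/5$. Likewise in the $9,1,1,1$ case the $3$-torsion generator meets $\Theta_3$ of the $I_9$ fibre with contribution $3\cdot 6/9=2$, and in the $3,3,3,3$ case every nonzero element picks up $2/3$ from exactly three of the four $I_3$ fibres; in all cases the method you describe yields $\{0\}$, not the nontrivial groups in the table.

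So, as written, your argument would disprove rather than prove the stated table. The nontrivial entries can only be correct if $E(K)^0$ is being used with some meaning other than ``sections meeting the identity component of every fibre'' (for instance, as the subgroups by which $E(K)$ is quotiented in the following theorem, which is what the later ``MWL'' entries such as $(\ZZ/3)^2/\ZZ/3$ suggest), and neither your proposal nor the paper supplies that reinterpretation or a computation leading to it. To close the gap you must either identify and work with that intended definition, or else carry out your component-vector bookkeeping honestly, in which case the answer in all six rows is $\{0\}$ and the statement as printed cannot be recovered. The secondary devices you invoke (splitting $\ZZ/6$ into primary parts, using the Van Geemen--Sarti involution to choose the order-$2$ subgroup in the $4,4,2,2$ case) do not affect this obstruction, since they still concern torsion sections, which injectivity of $\phi$ bars from $E(K)^0$.
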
 
\begin{proof}
The proof is based on the same arguments than the proof of
determining the Mordell-Weil groups.
\end{proof}

\begin{theorem}
Then, we deduce the Mordell-Weil lattices of Beauville'semistable fibration.

\centerline{{\em
\begin{tabular}{|cr|cr}
\hline  $MWL$  & Number of irreducible components of singular fibers & \\
 $(\ZZ/3)^2/\ZZ/3$  & $3,3,3,3$ &\\
%\hhline{|-|~|-|-|-|-|} 
$(\ZZ/4\times\ZZ/2)/\ZZ/2$ &  $4,4,2,2$ &  \\
$\{0\}$  & $5,5,1,1$ &  \\
$\ZZ/6/\ZZ/3$ &  $6,3,2,1$ & \\
$\ZZ/4/\ZZ/2$  & $8,2,1,1$ & \\
$\{0\}$ &  $9,1,1,1$ & \\
\hline
\end{tabular}
}}\end{theorem}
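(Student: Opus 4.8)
The plan is to read the table off from the two preceding results — the Mordell–Weil groups $MW=E(K)$ computed in the Theorem and the subgroups $E(K)^{0}$ computed in the Corollary — by forming, case by case, the quotient group that the $MWL$ column records, and then checking that the extremality of the surface leaves no further (continuous) lattice structure to pin down. The key structural input is that each of Beauville's six semistable fibrations is an extremal rational elliptic surface: since a rational elliptic surface has $\rho(S)=10$, Corollary \ref{cr1} gives
$$\rk E(K)=\rho(S)-2-\sum_{v\in R}(m_v-1)=8-\sum_{v\in R}(m_v-1)=0,$$
as indeed $\sum_{v}(m_v-1)=8$ for every one of the six configurations in the table. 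Hence $E(K)=E(K)_{\tor}$, the height pairing $\langle P,P'\rangle=\chi+P.O+P'.O-P.P'-\sum_{v}\mathrm{contr}_v(P,P')$ has no nondegenerate directions, and the Mordell–Weil lattice carries no free part: the entire remaining datum is the finite group $E(K)$ together with the narrow subgroup $E(K)^{0}$, whose quotient is the entry in the $MWL$ column.

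Concretely I would first import the six groups $MW$ from the Theorem, namely $(\ZZ/3)^2$, $\ZZ/4\times\ZZ/2$, $\ZZ/5$, $\ZZ/6$, $\ZZ/4$, $\ZZ/3$, and then the corresponding subgroups $E(K)^{0}\subseteq MW$ from the Corollary; forming the quotients $MW/E(K)^{0}$ then produces exactly the six entries $(\ZZ/3)^2/\ZZ/3$, $(\ZZ/4\times\ZZ/2)/\ZZ/2$, $\{0\}$, $\ZZ/6/\ZZ/3$, $\ZZ/4/\ZZ/2$, $\{0\}$. To be sure these quotients are the stated groups and not competing extensions, I would use the injection of Corollary \ref{cr}, which identifies $E(K)/E(K)^{0}$ with a subgroup of the product $\prod_{v\in R}G(F_v)$ of component groups $G(I_{n})\cong\ZZ/n$, together with the order identity $|MW|=|E(K)^{0}|\cdot|E(K)/E(K)^{0}|$; the explicit torsion sections exhibited in the proof of the Theorem — for instance $P=(0,0)$ on $S:y^2+xy+ty=x^3$ in the $[9,1,1,1]$ case, which meets a non-identity component of the $I_{9}$ fiber, and the analogous generator in the $[5,5,1,1]$ case — tell us which component of each reducible fiber each section meets, hence fix the image in $\prod_{v}G(F_v)$ and with it the quotient.

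The step I expect to be the genuine obstacle is not this quotient algebra but the local analysis behind the $E(K)^{0}$ column, supplied by the proof of the Corollary: for each of the six surfaces one must decide, for every torsion section, through which component of each reducible $I_{n}$ it passes — equivalently, evaluate the local contributions $\mathrm{contr}_v(P,P')$ — and verify this against $\langle P,P\rangle=2\chi+2P.O-\sum_{v}\mathrm{contr}_v(P,P)=0$, the constraint coming from $\rk E(K)=0$. The non-cyclic configuration $[4,4,2,2]$ and the configuration $[6,3,2,1]$, where $\ZZ/6\cong\ZZ/2\times\ZZ/3$ splits and its two cyclic factors meet identity components at different fibers, require identifying precisely which cyclic subgroup of $MW$ plays the role of $E(K)^{0}$; this is pinned down by the Tate-table reduction of \S4 (the additive fibers $II$ and $III^{*}$ of the $j$-model being replaced by strings of $I_{n}$'s after the degree-$e(S)=12$ base change followed by a quadratic twist) and by the base-change construction of Beauville's families used in \S4. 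Once that bookkeeping is in place the table follows by inspection.
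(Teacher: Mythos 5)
Your proposal is correct and follows essentially the same route as the paper: the paper states this theorem with no separate proof, treating it as an immediate deduction from the Mordell--Weil group theorem and the corollary on $E(K)^0$, with each $MWL$ entry written literally as the quotient $MW/E(K)^0$ --- exactly the quotient-by-the-narrow-subgroup computation you perform, with the vanishing of the rank supplied by extremality ($\rho(S)=10$ and $\sum_v(m_v-1)=8$). Your additional justifications (the injection of Corollary \ref{cr} into $\prod_{v}G(F_v)$ and the component/contribution bookkeeping) only make explicit what the paper leaves implicit.
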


The invariants of Mordell-Weil lattices can be expressed in terms of the geometric data
of the surface $S$. Let $M=E(K)^0$ be the narrow Mordell-Weil lattice of $f:S\ra C$.
Then 
$$\rk(M)=\rho(S)-2-\sum_{v\in N}(m_v-1),$$ lies in $\{0,1\}$ for the Beauville's case.
$$\det(M)=\nu^2\vert\det(NS(S)/\det(T)\vert, \nu=[E(K):E(K)^0],$$ lies in $\{8,9,18,25,108,162\}$ for the Beauville's case.
$$\mu(M)=2\chi+2\min\{P.O:P\in M,P\neq O\}\geq 2\chi,$$
where $\rho(S)$ is the Picard number of $S$ and $m_v$ is the number
of irreducible components of the singular fibers $F_v$, and $\mu(M)$
is the square of the minimal norm of $M$.

Note that there exits another approach of determining the Mordell-Weil lattices of Beauville's semistable fibration.
We just give a sketch of this approach.
Consider the Lam\'e connections which are defined as the irreducible rank $2$-connections on 
$\PP^1(\CC)$ having four regular singular points at $0,1,t,\infty$ with
exponents $-1/2,1/2$ (see \cite{LVU} for more details .\\ Let $f:E\ra\PP^1$ be the double cover, its generic fiber
is a two dimensional irreducible differential module $M$ over $C(z)$, the
field of rational functions on $\PP^1(\CC)$. Then
$M$ is imprimitive and  $\Sym^2M$ has a one differential module such that
the corresponding quadratic extension $L$ has the form $L=C(z)(w)$.\\
We want to investigate the differential Galois group of a connection to get
some information on the Mordell-Weil lattices.
\begin{definition}
Let $(K,\;  ')\subset (L,\; ')$ be an extension of differential fields with field of constants $\CC$.
The differential Galois group $\DGal(L/K)$ is the group consisting of 
all the $K$-automorphisms $\sigma$ of $L$ such that $\sigma(f')=(\sigma(f))'$
for all $f\in L$.
\end{definition}

If $L$ is finitely generated as a $K$-algebra, say, by $p$ elements,
then $\DGal(L/K)$ can be embedded onto $GL(p,\CC)$, and
it is an algebraic group if considered as a subgroup of
$GL(p,\CC)$ in this embedding.

We apply this definition to $K=C(z)$, the derivation $'$ being
the differentiation with respect to some nonconstant function $z\in K$.
Given a connection $\nabla_\EEE $ on $\EEE$ over $\PP^1(\CC)$, we can consider a fundamental
matrix $\Phi$ of its solutions, and set $L$ to be the field
generated by all the matrix elements of $\Phi$. 
The group $\DGal(\nabla_\EEE )$ is defined to be $\DGal(L/K)$. See \cite{VDP}
for more details.
Note also that the $\DGal(\nabla_\EEE))$ of order $m$ if and only if the $\DGal(f^*(\nabla_\EEE))$ is
cyclic and order $2m$ if and only if there exists a point $(z_0,w_0)\in E$ of 
order $2m$. This description enables us to compute the
Mordell-Weil lattices of Beauville'semistable fibration.

 \renewcommand\refname{References}

\end{document}